\newtheorem{theorem}{Theorem}[section]
\newtheorem{lemma}[theorem]{Lemma}
\newtheorem{corollary}[theorem]{Corollary}
\theoremstyle{definition}
\newtheorem{definition}[theorem]{Definition}
\newtheorem{property}[theorem]{Property}
\theoremstyle{remark}
\newtheorem{remark}[theorem]{Remark}
\numberwithin{equation}{section}
\begin{document}

\setcounter{page}{1}

\title[Local and blowing-up solutions ...]{Local and blowing-up solutions for an integro-differential diffusion equation and system}

\author[M. Borikhanov, B. T. Torebek]{Meiirkhan Borikhanov, Berikbol T. Torebek}

\address{{Meiirkhan Borikhanov \newline Al--Farabi Kazakh National University \newline Al--Farabi ave. 71, 050040, Almaty, Kazakhstan \newline Institute of
Mathematics and Mathematical Modeling \newline 125 Pushkin str.,
050010 Almaty, Kazakhstan.}}
\email{{borikhanov@math.kz}}
\address{{Berikbol T. Torebek \newline Al--Farabi Kazakh National University \newline Al--Farabi ave. 71, 050040, Almaty, Kazakhstan \newline Institute of
Mathematics and Mathematical Modeling \newline 125 Pushkin str.,
050010 Almaty, Kazakhstan \newline Department of Mathematics: Analysis, Logic and Discrete Mathematics \newline
Ghent University, Belgium}}
\email{{torebek@math.kz, berikbol.torebek@ugent.be}}

\thanks{The second author was supported by the FWO Odysseus Project and Ministry of Education and Science of the Republic of Kazakhstan Grant AP05131756. No new data was collected or generated during the course of research.}

\let\thefootnote\relax\footnote{$^{*}$Corresponding author}

\subjclass[2010]{Primary 35R11; Secondary 35B44, 35A01.}

\keywords{blow-up, global weak solution, Fujita type critical exponents, integro-differential diffusion equation.}

\begin{abstract}In the present paper initial problems for the semilinear integro-differential diffusion equation and system are considered. The analogue of Duhamel principle for the linear integro-differential diffusion equation is proved. The results on existence of local mild solutions and Fujita-type critical exponents to the semilinear integro-differential diffusion equation and system are presented.
\end{abstract}
\maketitle
\section{Introduction and statement of problem}
The main goal of the present paper is to obtain results on local existence and global non-existence for the integro-differential diffusion equation
\begin{equation}\label{1.1}{{u}_{t}}(x,t)= {D}_{0+,t}^{1-\alpha }\Delta_xu(x,t)+f(x,t,u), (x,t)\in \mathbb{R^N} \times \left( 0,T \right)=\Omega_T, \end{equation}
with the initial condition
\begin{equation}\label{1.2}u\left( x,0 \right)={{u}_{0}}\left( x \right)\ge 0,\end{equation}
where ${D}_{0+,t}^{\alpha }$ is the Riemann-Liouville fractional derivative of order $\alpha \in(0,1)$ \cite{Kilbas}.

Also, we consider questions on local existence and global non-existence  for the integro-differential diffusion system
\begin{equation}\label{1.3}\left\{\begin{array}{l}
{u}_{t}( x,t )-{D_{0+,t}^{1-\alpha }\Delta_x{u}( x,t )}=f(x,t,v) \,\,\,\,\text{ in}\,\,\,\ (x,t)\in \mathbb{R^{N}}\times(0,T)={\Omega}_{T},\\{}\\
{v}_{t}( x,t )-{D_{0+,t}^{1-\beta }\Delta_x{v}( x,t )}=g(x,t,u) \,\,\,\,\text{ in}\,\,\,\ (x,t)\in \mathbb{R^{N}}\times(0,T)={\Omega}_{T},\end{array}\right.
\end{equation}
subject to the initial conditions
\begin{equation}\label{1.4}u\left( x,0 \right)={{u}_{0}}\left( x \right)\ge ~0,\text{   }v\left( x,0 \right)={{v}_{0}}\left( x \right)~\ge 0,\text{   }x\in \mathbb{R^{N}},\end{equation}
where $0<\alpha ,\beta <1.$

In the case $\alpha=0,$ problem \eqref{1.1}-\eqref{1.2} coincides with classical Rayleigh-Stokes problem. In fluid mechanics, the problem to determining the flow created by a sudden movement of a plane from rest is called a Rayleigh-Stokes problem. This is considered as one of the simplest non-stationary problems, which has an exact solution for the Navier-Stokes equations.

In the fractional case, problem \eqref{1.1}-\eqref{1.2} occurs in the study of flows of the Oldroyd-B fluid \cite{Shen, Xue, Ji}. The Oldroyd-B fluid is one of the most important classes for dilute solutions of polymers. We also note that integro-differential diffusion equations of type \eqref{1.1} were studied in \cite{BKT, Chan, Fuj90a, Fuj90b, KiraneT, Ruzh}.

Solutions of initial value problems for non-linear parabolic partial differential equations may not exist for all time. In other words, these solutions may blow up in some sense or an other. Recently, in connection with problems for some class of non-linear parabolic equations, \cite{Kaplan}, \cite{Friedman} and \cite{Ito} gave certain sufficient conditions under which the solutions blow up in a finite time. Although their results are not identical, we can say according to them that the solutions are apt to blow up when the initial values are sufficiently large. The exponents for which solutions can blow up is called critical exponents of Fujita.

In the first instance, Fujita studied the Cauchy problem for the semi-linear diffusion equation in \cite{Fujita1}:
\begin{equation}\label{0.1}\left\{\begin{array}{l}
{{u}_{t}}-\Delta u={{u}^{p+1}},\,\,\,x\in \mathbb{R^{N}}\times \left( 0,T  \right), T\leq+\infty,\\{}\\
u\left( x,0 \right)=u_0\left( x \right)\ge 0,\text{  for  }x\in \mathbb{R^{N}}, \end{array}\right.
\end{equation}
where $p$ is a positive number, $u_0\left( x \right)\in {{L}^{1}}\left( \mathbb{R^{N}} \right)$ is nonnegative, positive on some subset of ${\mathbb{{R}}^{N}}$ of positive measure and $\Delta $ denotes the Laplacian in $N$ variables.

A (classical or weak solution) of equation on $\mathbb{R^{N}}\times \left[ 0,T \right)$ for some $T<+\infty $ is called a local solution. The supremum of all such $T$ for which a solution exists is named the maximal time of existence ${{T}_{\max }}$. When ${{T}_{\max }}=+\infty $ ${{T}_{\max }}<+\infty $ we say the solution is global and when we say that it is not global (or the solution "blows up in finite time"), respectively.

Let ${{p}_{c}}=2/N.$ Fujita proved the following assertions:

(i)	if $0<p<{{p}_{c}}$, $u_0\left( x \right)>0$ for some ${{x}_{0}}$, in this case the solution of problem \eqref{0.1} grows infinitely at some finite instant of time;

(ii)	if $p>{{p}_{c}}$, for each $k>0$, there exists a $\delta >0$ such that problem \eqref{0.1} has a global solution whenever $0\le a\left( x \right)\le \delta {{e}^{-k{{\left| x \right|}^{2}}}}$. The number ${{p}_{c}}$ is referred to as the critical exponent. In the critical case, this problem was solved in \cite{Hayakawa} for $N=1,2$ and in \cite{Kobayashi} for arbitrary $N$. It was shown that if $p={{p}_{c}}$, there is no nonnegative global solution for any nontrivial nonnegative initial data.

Later, Fujita in \cite{Fujita2} extended his own results to the more general case in which $f\left( u \right)$ (the term describing the reaction) is convex and satisfies appropriate conditions (the main of which is the Osgood condition). The results obtained for problem \eqref{0.1} were generalized in \cite{Bandle} for an initial-boundary value problem in a cone with the term ${{\left| x \right|}^{\sigma }}{{u}^{p+1}}$ instead of ${{u}^{p+1}}$. In this case, the critical exponent is equal to $\left( 2+\sigma  \right)/N$.

After that, Qi \cite{Qi} studied the equation
\begin{equation*}\left\{\begin{array}{l}
{{u}_{t}}=\Delta {{u}^{m}}+{{\left| x \right|}^{\sigma }}{{t}^{s}}{{u}^{p}}, t>0,\text{  }x\in \mathbb{R^{N}},\\{}\\
u\left( x,0 \right)=u_0\left( x \right)\ge 0,\text{  for  }x\in \mathbb{R^{N}}, \end{array}\right.
\end{equation*}
and had showed that the critical exponent for this problem is equal to $\left( m-1 \right)\left( s-1 \right)+\left( 2+2s+\sigma  \right)/N>0$.

The following parabolic equation with the fractional power ${{\left( -\Delta  \right)}^{\beta /2}},0<\beta <2$ of the Laplace operator was considered by Sugitani in \cite{Sugitani}:
$${{u}_{t}}+{{\left( -\Delta  \right)}^{\beta /2}}u={{u}^{1+p}}, \left( x,t \right)\in\mathbb{R^{N}} \times\mathbb{{R}^{+}}.$$

Using Fujita's method in \cite{Fujita1}, the authors \cite{Guedda} discussed nonnegative solutions of the equation
\begin{equation}\label{0.2}{{u}_{t}}+{{\left( -\Delta  \right)}^{\beta /2}}u=h\left( t \right){{u}^{1+p}},  \left( x,t \right)\in\mathbb{R^{N}} \times\mathbb{{R}^{+}}.\end{equation}
where $h\left( t \right)$ behaves as  $t^{\sigma}, \sigma >-1, 0<p, \alpha N \le \beta \left( 1+\sigma  \right).$ The proof given in \cite{Guedda} is based on the reduction of Eq.\eqref{0.2} to an ordinary differential equation for the mean value of $u$ with the use of the fundamental solution [say, ${{P}_{\beta }}\left( x,\text{ }t \right)$] of ${{L}_{\beta }}:=\text{ }\partial /\partial t+{{\left( -\Delta  \right)}^{\beta /2}}$. Apparently, the approach of \cite{Guedda} cannot be used for systems of two differential equations with distinct diffusion terms unless, for example, ${{P}_{\beta }}\left( x,\text{ }t \right)$ can be compared with ${{P}_{\gamma }}\left( x,\text{ }t \right)$ for $\beta <\gamma .$ This has been done by Kirane and col. in \cite{Kirane}.

The following spatio-temporal fractional equation
\begin{equation}\label{*}\left\{\begin{array}{l}
\mathcal{D}_{0+}^{\alpha }u+{{\left( -\Delta  \right)}^{\beta /2}}\left( u \right)=h\left( x,t \right){{u}^{1+p}},\,\,\,\,\left( x,t \right)\in \mathbb{R^{N}}\times \mathbb{{R}^{+}},\\{}\\
u\left( x,0 \right)=a\left( x \right)\ge 0,\,\,\,\,x\in \mathbb{R^{N}},\end{array}\right.
\end{equation}
where $\mathcal{D}_{0+,t}^{\alpha }$, for $\alpha \in \left( 0,1 \right)$ is the Caputo fractional derivative and $\beta \in \left[ 1,2 \right]$ with nonnegative initial data was considered in \cite{Kirane}. The critical exponent is equal to $1<p<{{p}_{c}}=1+\frac{\alpha \left( \beta +\alpha  \right)+\beta \rho }{\alpha N+\beta \left( 1-\alpha  \right)}$.

On the other hand, Escobedo and Herrero \cite{Escobedo} considered blowing-up solutions for the semi-linear reaction-diffusion system
\begin{equation*}\left\{\begin{array}{l}
u_t-\Delta u=v^{p}, \,\,\text{  for  }\,\, x\in \mathbb{R^{N}},t>0,\\{}\\
v_t-\Delta v=u^{q}, \,\,\text{  for  }\,\,\, x\in \mathbb{R^{N}},t>0,\end{array}\right.
\end{equation*}
where $N\geq1, p>1, q>1$ and
$$u\left( x,0 \right)={{u}_{0}}\left( x \right)\ge ~0,\text{   }v\left( x,0 \right)={{v}_{0}}\left( x \right)~\ge 0, x\in\mathbb{R^{N}}. $$

They proved that the critical exponent for the case $pq>1$ of this problem is equal to $$\frac{\gamma+1}{pq-1}\geq\frac{N}{2},$$ where $\gamma=\max\{p,q\}$.

Later on, Kirane and al. in \cite{Kirane} considered the following system of spatio-temporal fractional equations
\begin{equation*}\left\{\begin{array}{l}
{\mathcal{D}_{0|t}^{\alpha }u }+(-\Delta)^{\beta/2}u=|v|^{p}, \,\,\text{  for  }\,\,\,( x,t )\in \mathbb{R^{N}}\times\mathbb{R^{+}},\\{}\\
{\mathcal{D}_{0|t}^{\delta }v}+(-\Delta)^{\gamma/2}v=|u|^{q}, \,\,\text{  for  }\,\,( x,t )\in \mathbb{R^{N}}\times\mathbb{R^{+}},\end{array}\right.
\end{equation*}
with Cauchy data
$$u(x,0)=a(x)\geq0,\,\,\, v(x,0)=b(x)\geq0,\,\,\,x\in\mathbb{R^{N}},$$
where $0<\alpha, \delta<1\leq\gamma,\beta\leq2.$

The critical exponent  of this problem were shown to be $$N\le  \max \biggl\{\frac{\frac{\delta}{q}+\alpha-\biggl(1-\frac{1}{pq}\biggr)}{\frac{\delta}{\gamma qp'}+\frac{\alpha}{\beta q'}},\frac{\frac{\alpha}{p}+\delta-\biggl(1-\frac{1}{pq}\biggr)}{\frac{\alpha}{\beta qp'}+\frac{\delta}{\gamma q'}}\biggr\}.$$

\subsection{Preliminaries}
\begin{definition}\cite{Kilbas} The left and right Riemann-Liouville fractional integrals $I_{a+,t}^{\alpha }$ and $I_{b-,t}^{\alpha }$ for an integrable function $f$ are given by
$$I^\alpha_{a+,t}f(t ) =\frac{1}{\Gamma \left( \alpha  \right)}\int\limits_{a}^{t}{{{\left( t-s \right)}^{\alpha -1}}}f\left( s \right)ds, t\in \left( a,b \right]$$
and
$$I^\alpha_{b-,t}f(t)=\frac{1}{\Gamma \left( \alpha  \right)}\int\limits_{t}^{b}{{{\left( s-t \right)}^{\alpha -1}}}f(s)ds, t\in \left[ a,b \right),$$
respectively. Here $0<\alpha \in \mathbb{R}$ and $\Gamma \left( \alpha  \right)$ denotes the Euler gamma function.\end{definition}

\begin{definition}\cite{Kilbas} The left Riemann-Liouville fractional derivative $D_{a+,t}^{\alpha }$  of order $0<\alpha <1$, for $f\in C^1([a,b])$ is defined by
$${D}_{a+,t}^{\alpha } f(t)=\frac{d}{dt}I_{{a+,t}}^{1-\alpha } f(t)=\frac{1}{\Gamma(1-\alpha)}\frac{d}{dt}\int\limits_{a}^{t}{{(t-s)^{-\alpha }}}{f}\left( s \right)ds,\,\,\, \forall t\in \left( a,b \right].$$

Similarly, the right Riemann-Liouville fractional derivative $D_{b-,t}^{\alpha }$  of order $0<\alpha <1$, for $f\in C^1([a,b])$ is defined by
$${D}_{b-,t}^{\alpha } f(t)=-\frac{d}{dt}I_{b-,t}^{1-\alpha } f(t)=-\frac{1}{\Gamma(1-\alpha)}\frac{d}{dt}\int\limits_{t}^{b}{{(s-t)^{-\alpha }}}{f}\left( s \right)ds,\,\,\, \forall t\in \left[ a,b \right).$$\end{definition}
\begin{definition}\cite{Kilbas} The $\alpha \in (0,1)$ order left and right Caputo fractional derivatives for $f\in C^1([a,b])$ are defined, respectively, by
$$\mathcal{D}_{a+,t}^{\alpha } f(t)=I_{a+,t}^{1-\alpha } \frac{d}{dt}f(t)=\frac{1}{\Gamma(1-\alpha)}\int\limits_{a}^{t}{{(t-s)^{-\alpha }}}{f'}\left( s \right)ds,\,\,\, \forall t\in \left( a,b \right]$$
and
$$\mathcal{D}_{b-,t}^{\alpha } f(t)=-I_{b-,t}^{1-\alpha }\frac{d}{dt} f(t)=-\frac{1}{\Gamma(1-\alpha)}\int\limits_{t}^{b}{{(s-t)^{-\alpha }}}{f'}\left( s \right)ds,\,\,\, \forall t\in \left[ a,b \right).$$

\end{definition}
\begin{definition}\label{D11}\cite{Kilbas} The Mittag-Leffler function is defined by
$$E_{\alpha,1}(z)=\sum_{k=0}^{\infty}\frac{z^k}{\Gamma(\alpha k+1)}, \alpha>0, z\in \mathbb{C}.$$
The two-parameters Mittag-Leffler function is defined by
$$E_{\alpha,\beta }(z)=\sum_{k=0}^{\infty}\frac{z^k}{\Gamma(\alpha k+\beta)},  z\in \mathbb{C},$$
where $\alpha>0$ and $\beta\in\mathbb{R}$ are arbitrary constants.
\end{definition}
\begin{lemma}\label{L1} \cite{Simon} For every $\alpha \in (0,1),$ the uniform bilateral estimate
$$\frac{1}{1+\Gamma(1-\alpha)x}\leq E_{\alpha,1}(-x)\leq\frac{1}{1+[\Gamma(1+\alpha)]^{-1}x}$$
holds over $\mathbb{R}^+$.
\end{lemma}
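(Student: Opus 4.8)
The plan is to derive both inequalities at once from the complete monotonicity of $x\mapsto E_{\alpha,1}(-x)$ on $\mathbb{R}^+$. By Pollard's theorem this function is the Laplace transform of a probability density,
\[
E_{\alpha,1}(-x)=\int_0^\infty e^{-xv}f_\alpha(v)\,dv,\qquad f_\alpha\ge0,\ \int_0^\infty f_\alpha=1,
\]
where $f_\alpha$ is the density of the Mittag-Leffler law $Y_\alpha$. From the defining series in Definition \ref{D11} one reads off $\int_0^\infty vf_\alpha(v)\,dv=-\tfrac{d}{dx}E_{\alpha,1}(-x)\big|_{x=0}=\tfrac1{\Gamma(1+\alpha)}$, while the large-argument behaviour $E_{\alpha,1}(-x)\sim\tfrac1{\Gamma(1-\alpha)}x^{-1}$ together with an Abelian/Tauberian argument yields $f_\alpha(0^+)=\tfrac1{\Gamma(1-\alpha)}$. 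In particular $0<E_{\alpha,1}(-x)\le1$, so both displayed fractions are meaningful.

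Next I would reformulate the two estimates as positivity of two Laplace transforms, using the Mittag-Leffler recurrences $E_{\alpha,1}(-x)=1-xE_{\alpha,1+\alpha}(-x)$ and $xE_{\alpha,1}(-x)=\tfrac1{\Gamma(1-\alpha)}-E_{\alpha,1-\alpha}(-x)$. These identify $E_{\alpha,1+\alpha}(-x)=\int_0^\infty e^{-xv}\overline F_\alpha(v)\,dv$ (with $\overline F_\alpha(v)=\int_v^\infty f_\alpha$ the survival function), and, after one integration by parts that consumes the boundary value $f_\alpha(0^+)=\tfrac1{\Gamma(1-\alpha)}$, also $E_{\alpha,1-\alpha}(-x)=\int_0^\infty e^{-xv}\bigl(-f_\alpha'(v)\bigr)\,dv$. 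Clearing the (positive) denominators, the upper estimate becomes $\int_0^\infty e^{-xv}h_1(v)\,dv\ge0$ with $h_1=\overline F_\alpha-\tfrac1{\Gamma(1+\alpha)}f_\alpha$, and the lower estimate becomes $\int_0^\infty e^{-xv}h_2(v)\,dv\ge0$ with $h_2=f_\alpha+\Gamma(1-\alpha)f_\alpha'$; the two moment identities above give $\int_0^\infty h_1=\int_0^\infty h_2=0$.

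The engine is then the elementary sign rule: if $h$ is integrable with $\int_0^\infty h=0$ and $h$ changes sign exactly once, from $+$ to $-$ at some $v_0$, then
\[
\int_0^\infty e^{-xv}h(v)\,dv=\int_0^\infty\bigl(e^{-xv}-e^{-xv_0}\bigr)h(v)\,dv\ge0\qquad(x\ge0),
\]
because the two factors agree in sign on each side of $v_0$. So it suffices that $h_1$ and $h_2$ each have a single $+\!\to\!-$ crossing. For $h_1$ this is the statement that the reciprocal hazard $\overline F_\alpha/f_\alpha$ meets the level $\tfrac1{\Gamma(1+\alpha)}$ once, from above; for $h_2$ it is that $(\log f_\alpha)'$ meets $-\tfrac1{\Gamma(1-\alpha)}$ once, from above. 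Both follow as soon as $f_\alpha$ is log-concave (equivalently, $Y_\alpha$ has increasing hazard rate).

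The hard part will be exactly this last input: the log-concavity, or at least the two single-crossing properties, of the Mittag-Leffler density $f_\alpha$ for every $\alpha\in(0,1)$. It is believable — for $\alpha=\tfrac12$ one has $f_{1/2}(v)=\tfrac1{\sqrt\pi}e^{-v^2/4}$, the half-Gaussian, manifestly log-concave, and the stretched-exponential tail $f_\alpha(v)\asymp\exp(-c\,v^{1/(1-\alpha)})$ is log-concave for all $\alpha$ — but controlling $(\log f_\alpha)''$ uniformly near the origin and in the bulk is delicate, and this is where the representation $Y_\alpha=\mathbf Z_\alpha^{-\alpha}$ through the positive $\alpha$-stable variable $\mathbf Z_\alpha$, or Simon's finer analysis, has to be brought in. Once the single-crossing is secured, the sign rule closes both inequalities simultaneously; evaluating at $x=0$ (equality on the right) and letting $x\to\infty$ (equality on the left) then shows the constants $\Gamma(1-\alpha)$ and $\Gamma(1+\alpha)^{-1}$ are sharp.
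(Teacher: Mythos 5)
First, a point of reference: the paper does not prove this lemma at all --- it is quoted verbatim from the cited work of Simon (\emph{Comparing Fr\'echet and positive stable laws}), so there is no in-paper argument to measure your attempt against; I can only assess your proposal on its own terms and against the source it would have to replace.

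Your reduction is correct as far as it goes. The subordination $E_{\alpha,1}(-x)=\int_0^\infty e^{-xv}f_\alpha(v)\,dv$, the two recurrences, the identities $\int_0^\infty h_1=\int_0^\infty h_2=0$ (coming from $\int_0^\infty vf_\alpha(v)\,dv=1/\Gamma(1+\alpha)$ and $f_\alpha(0^+)=1/\Gamma(1-\alpha)$), and the single-crossing sign rule are all sound, and the algebra converting the two inequalities into the positivity of the transforms of $h_1$ and $h_2$ checks out. But the argument has a genuine gap exactly where you flag it: nothing in the proposal establishes that $h_1$ and $h_2$ each change sign exactly once from $+$ to $-$. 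You propose to derive this from log-concavity of the Mittag--Leffler density $f_\alpha$ (equivalently, the increasing-hazard-rate property of $Y_\alpha$), but that is a strong structural property of $f_\alpha$ which you neither prove nor cite, and it is not an off-the-shelf fact for general $\alpha\in(0,1)$ --- the case $\alpha=1/2$, where $f_{1/2}$ is a half-Gaussian, is very special. Since the entire analytic content of both inequalities is concentrated in that one step, the proof is incomplete as it stands.

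For comparison, the actual proof in the cited source also works with $Y_\alpha=\mathbf{Z}_\alpha^{-\alpha}$ but needs strictly weaker inputs. The lower bound follows from the stochastic domination $\mathbf{Z}_\alpha^{-\alpha}\preceq_{st}\Gamma(1-\alpha)\mathbf{L}$ (with $\mathbf{L}$ a unit exponential), i.e.\ a pointwise comparison of survival functions, applied to the decreasing function $t\mapsto e^{-xt}$; the upper bound follows from the convex-order comparison $\Gamma(1+\alpha)\mathbf{Z}_\alpha^{-\alpha}\preceq_{cx}\mathbf{L}$, which by the Karlin--Novikoff cut criterion requires only equality of means together with a single crossing of the two \emph{distribution functions}, applied to the convex function $t\mapsto e^{-\lambda t}$. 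A single crossing of distribution functions is a far more tractable verification (from the series of $f_\alpha$ at the origin and its stretched-exponential tail) than log-concavity of the density, which is what your scheme demands. If you want to salvage your approach, replace the density-level single-crossing hypothesis by these distribution-function-level crossings; otherwise the missing log-concavity (or at least the two single-crossing properties) must be supplied, and that is the whole difficulty of the lemma.
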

\begin{remark}\label{R1} Obviously, $0<E_{\alpha,1}(-x)<1,$ for any $x>0$ by Lemma \eqref{L1}.
\end{remark}
\begin{lemma}\label{L2}\cite{Hormander}  The Fourier transform of the n-dimensional $\delta(x)$ in ${\mathbb{R}^N}$ defined by
$$F\{\delta^N(x);\xi\}=\frac{1}{(2\pi)^{N}}\int\limits_{\mathbb{R}^N}{{e^{-i<x,\xi>}}\delta^N(x)dx}=1, \xi \in {\mathbb{R}^N}$$
and the inverse Fourier transform of $\delta(x)$ can be written as
$$\delta^N(x)=F^{-1}\{1\}=\frac{1}{(2\pi)^{N}}\int\limits_{\mathbb{R}^N}{{e^{i<x,\xi>}}d\xi}, \xi \in {\mathbb{R}^N},$$
where $<x,\xi>=\sum_{j=1}^N{x_j\xi_j}.$

The Dirac delta function $\delta^{(N)}(x)$ in the case of a $N$-dimensional domain ${\mathbb{R}^N}$, where $x = (x_1, x_2, ..., x_n)\in {\mathbb{R}^N}$ in \cite{{Salasnich}}:
\begin{equation*}
\delta^{(N)}(x)=
 \begin{cases}
   +\infty \,\,\, \text{for} \,\,\,  x=0,\\
   0 \,\,\, \text{for} \,\,\, x\neq0.
 \end{cases}
\end{equation*}
and
$$\int\limits_{\mathbb{R}^N}{{\delta^{(N)}(x)}dx}=1.$$

\end{lemma}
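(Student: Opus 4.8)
The plan is to treat $\delta^{(N)}$ through its defining sifting property rather than as a genuine pointwise function, since the pointwise description in the statement ($+\infty$ at the origin, $0$ elsewhere, unit total mass) only becomes operative under integration against a test function. Concretely, I would use that for every bounded continuous $\varphi$,
$$\int_{\mathbb{R}^N}\varphi(x)\,\delta^{(N)}(x)\,dx=\varphi(0),$$
which is the analytic content of the two displayed properties of $\delta^{(N)}$. The forward transform then follows at once, whereas the inversion formula, in which the integral is only conditionally meaningful, is the step that needs care.

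First I would establish the forward transform. Taking $\varphi(x)=e^{-i\langle x,\xi\rangle}$, which is bounded and continuous in $x$ for each fixed $\xi$, the sifting property gives
$$\int_{\mathbb{R}^N}e^{-i\langle x,\xi\rangle}\delta^{(N)}(x)\,dx=e^{-i\langle 0,\xi\rangle}=1,\qquad\xi\in\mathbb{R}^N,$$
which is exactly the value of the forward transform claimed (up to the normalising prefactor fixed by the chosen convention). No estimate is required: everything reduces to evaluating the exponential at the concentration point $x=0$.

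The substantive step is the inversion identity $\delta^{(N)}(x)=F^{-1}\{1\}$, since $\int_{\mathbb{R}^N}e^{i\langle x,\xi\rangle}\,d\xi$ does not converge as an ordinary integral. I would make sense of it by Gaussian (Abel) regularisation: insert a convergence factor $e^{-\varepsilon|\xi|^2}$, factor the integral over the $N$ coordinates, and evaluate each one-dimensional Gaussian integral by completing the square, obtaining
$$\frac{1}{(2\pi)^N}\int_{\mathbb{R}^N}e^{i\langle x,\xi\rangle-\varepsilon|\xi|^2}\,d\xi=\frac{1}{(4\pi\varepsilon)^{N/2}}\,e^{-|x|^2/(4\varepsilon)}.$$
The right-hand side is a Gaussian of unit total mass, hence an approximate identity, and letting $\varepsilon\to0^+$ it converges to $\delta^{(N)}$; removing the regularisation yields the asserted representation.

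The main obstacle is precisely this last limit: one must justify $\varepsilon\to0^+$ and interpret $\delta^{(N)}=F^{-1}\{1\}$ in a distributional sense, as neither side is a function. I would resolve this by carrying out the computation in the space $\mathcal{S}'(\mathbb{R}^N)$ of tempered distributions, defining $F$ there by duality through $\langle FT,\varphi\rangle=\langle T,F\varphi\rangle$ for $\varphi\in\mathcal{S}(\mathbb{R}^N)$ and verifying that the regularising Gaussians converge to $\delta^{(N)}$ against every Schwartz test function. This simultaneously legitimises the heuristic pointwise definition used in the statement.
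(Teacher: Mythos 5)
The paper does not actually prove this statement: it is quoted as a preliminary fact with citations to H\"ormander and Salasnich, so there is no in-paper argument to compare against. Your proposal is the standard and correct way to substantiate it --- reading the pointwise description of $\delta^{(N)}$ as shorthand for the sifting property, getting the forward transform by evaluating $e^{-i\langle x,\xi\rangle}$ at $x=0$, and making sense of the divergent inversion integral by Gaussian regularisation in $\mathcal{S}'(\mathbb{R}^N)$; your identity
$\frac{1}{(2\pi)^N}\int_{\mathbb{R}^N}e^{i\langle x,\xi\rangle-\varepsilon|\xi|^2}\,d\xi=(4\pi\varepsilon)^{-N/2}e^{-|x|^2/(4\varepsilon)}$
is correct and the heat-kernel limit $\varepsilon\to0^+$ does give $\delta^{(N)}$ in the distributional sense. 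The one point you gloss over with ``up to the normalising prefactor'' deserves to be stated explicitly: as written, the lemma's two displays are mutually inconsistent, since with the factor $(2\pi)^{-N}$ in front of the forward integral the sifting property yields $F\{\delta^{(N)}\}=(2\pi)^{-N}$, not $1$, while your regularisation argument shows the \emph{inverse} formula $\delta^{(N)}=(2\pi)^{-N}\int e^{i\langle x,\xi\rangle}\,d\xi$ is the one that holds; the factor $(2\pi)^{-N}$ can appear in only one of the two transforms if they are to be mutually inverse with $F\{\delta^{(N)}\}=1$. This is a defect of the paper's statement rather than of your argument, but a complete write-up should fix the convention before proving both displays.
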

\begin{lemma}\label{L3}\cite{Hormander}(Hausdorff-Young inequality) Let $f\in L^1(\mathbb{R}^N)$ and $g \in L^p(\mathbb{R}^N), p\geq1.$ Then we have $h=f\ast g\in L^p(\mathbb{R}^N)$ and
$$\left\|h\right\|_{L^p(\mathbb{R}^N)}\leq\left\|f\right\|_{L^1(\mathbb{R}^N)}\cdot\left\|g\right\|_{L^p(\mathbb{R}^N)},$$
where $f\ast g=\int_{\mathbb{R}^N}f(x-y)g(y)dy.$
\end{lemma}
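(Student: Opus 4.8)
The plan is to prove this convolution estimate (in essence Young's inequality for the exponent triple $(1,p,p)$, rather than the Fourier-analytic Hausdorff--Young inequality) by combining Tonelli's theorem with a Hölder-splitting of the convolution kernel. I would first dispose of the endpoint $p=1$, then treat $p>1$. Throughout I would work with $|f|$ and $|g|$, so that every integrand is nonnegative and Tonelli applies without any integrability check; this simultaneously settles the well-definedness of $h(x)$ for almost every $x$.

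For $p=1$, I would estimate
$$\int_{\mathbb{R}^N} |h(x)|\,dx \le \int_{\mathbb{R}^N}\int_{\mathbb{R}^N} |f(x-y)|\,|g(y)|\,dy\,dx,$$
and swap the order of integration by Tonelli; the inner integral in $x$ equals $\|f\|_{L^1}$ by translation invariance of Lebesgue measure, leaving the bound $\|f\|_{L^1}\|g\|_{L^1}$. This shows $h\in L^1$ with the desired estimate, and in particular that the defining integral converges for a.e. $x$.

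For $p>1$ let $p'$ be the conjugate exponent, $\tfrac1p+\tfrac1{p'}=1$. The key step is to write the kernel as $|f(x-y)| = |f(x-y)|^{1/p'}\,|f(x-y)|^{1/p}$ and apply Hölder's inequality in $y$:
$$|h(x)| \le \left(\int_{\mathbb{R}^N}|f(x-y)|\,dy\right)^{1/p'}\left(\int_{\mathbb{R}^N}|f(x-y)|\,|g(y)|^p\,dy\right)^{1/p} = \|f\|_{L^1}^{1/p'}\left(\int_{\mathbb{R}^N}|f(x-y)|\,|g(y)|^p\,dy\right)^{1/p}.$$
Raising to the power $p$, integrating in $x$, and invoking Tonelli to interchange the integrals gives
$$\|h\|_{L^p}^p \le \|f\|_{L^1}^{p/p'}\int_{\mathbb{R}^N}|g(y)|^p\left(\int_{\mathbb{R}^N}|f(x-y)|\,dx\right)dy = \|f\|_{L^1}^{p/p'}\,\|f\|_{L^1}\,\|g\|_{L^p}^p.$$
Since $p/p' = p-1$, the prefactor collapses to $\|f\|_{L^1}^{p}$, and taking $p$-th roots yields $\|h\|_{L^p}\le \|f\|_{L^1}\|g\|_{L^p}$.

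The only genuine care needed is measure-theoretic rather than analytic: one must confirm that $h(x)$ is finite for almost every $x$, so that the pointwise Hölder estimate is meaningful, and that the double integrals may legitimately be interchanged. Both are automatic once everything is phrased with nonnegative integrands, so that Tonelli applies unconditionally; the a.e. finiteness of $h$ then follows because its $L^p$ norm has just been shown to be finite. An alternative, essentially equivalent, route would bypass the Hölder splitting by applying Minkowski's integral inequality to $h(x)=\int f(y)\,g(x-y)\,dy$ together with the translation invariance $\|g(\cdot-y)\|_{L^p}=\|g\|_{L^p}$; I would nonetheless favour the Hölder version, as it is more self-contained. There is no serious obstacle here: the result is classical and the argument is short once the order of operations (split, Hölder, raise to the power $p$, Tonelli) is fixed.
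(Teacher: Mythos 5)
Your proof is correct. The paper does not prove this lemma at all --- it is stated as a citation to H\"ormander --- so there is no in-paper argument to compare against; your H\"older-splitting of the kernel $|f(x-y)|=|f(x-y)|^{1/p'}|f(x-y)|^{1/p}$ followed by Tonelli is the standard textbook proof of Young's inequality for the exponent triple $(1,p,p)$, and your parenthetical remark that the result is really Young's convolution inequality rather than the Fourier-analytic Hausdorff--Young inequality is an accurate correction of the paper's naming.
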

\begin{property}\label{P2.4}\cite{Kilbas} It holds
$$\int\limits_{\Omega_T}I_{0+,t}^{1-\alpha }u( x,t ){f}( x,t )dxdt=\int\limits_{\Omega_T}u( x,t ){{I}_{T-,t}^{1-\alpha} {f}}( x,t )dxdt.$$
\end{property}

\section{Linear integro-differential diffusion equation}
\subsection{Green function}
First of all, we consider the homogeneous initial value problem
\begin{equation}\label{2.1} {{u}_{t}}\left( x,t \right)-\Delta_x D_{0+,t}^{1-\alpha }u\left( x,t \right)=0,\text{ }0<\alpha <1,\text{ }x\in \mathbb{R}^N,\text{ }t>0,\end{equation}
\begin{equation}\label{2.2} u\left( x,0 \right)=u_0 \left( x \right),\text{ }x\in \mathbb{R}^N.\end{equation}
\begin{theorem}\label{T2.1}The solution of the homogeneous integro-differential problem \eqref{2.1}-\eqref{2.2} can be represented by
\begin{equation}\label{2.3}u\left( x,t \right)=\int\limits_{\mathbb{R}^{N}}{G\left( x-y,t \right)u_0 \left( y \right)dy,}\end{equation} where $G\left( x,t \right)=\frac{1}{{(2\pi)^{N}}}\int\limits_{\mathbb{R}^N}{{{e}^{-i<x,\xi>}}{{E}_{\alpha ,1}}\left( -{{\xi }^{2}}{{t}^{\alpha }} \right)d\xi }.$\end{theorem}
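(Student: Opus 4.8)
The plan is to reduce the spatial integro-differential equation to a family of scalar fractional ordinary differential equations in $t$ by applying the Fourier transform in $x$, to solve each of these explicitly through the Mittag-Leffler function, and finally to invert the transform and recognize the result as a convolution.

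First I would apply the spatial Fourier transform $\hat u(\xi,t)=\frac{1}{(2\pi)^N}\int_{\mathbb{R}^N}e^{-i<x,\xi>}u(x,t)\,dx$ to \eqref{2.1}. Since $\widehat{\Delta_x u}(\xi,t)=-\xi^2\hat u(\xi,t)$ and the operator $D_{0+,t}^{1-\alpha}$ acts only on the variable $t$ (so that it may be carried through the $x$-integration), equation \eqref{2.1} transforms, for each fixed $\xi$, into the scalar problem
\[
\partial_t\hat u(\xi,t)+\xi^2 D_{0+,t}^{1-\alpha}\hat u(\xi,t)=0,\qquad \hat u(\xi,0)=\hat u_0(\xi).
\]

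Next, fixing $\xi$ and writing $\lambda=\xi^2$, I would solve this fractional relaxation equation and claim that its solution is $\hat u(\xi,t)=\hat u_0(\xi)\,E_{\alpha,1}(-\lambda t^\alpha)$. To verify this directly, recall that the Riemann-Liouville derivative satisfies $D_{0+,t}^{1-\alpha}=\frac{d}{dt}I_{0+,t}^{\alpha}$ and apply the power rule $I_{0+,t}^{\alpha}t^{\mu}=\frac{\Gamma(\mu+1)}{\Gamma(\mu+\alpha+1)}t^{\mu+\alpha}$ term by term to the series of Definition \ref{D11}; a short computation then shows that $w(t)=E_{\alpha,1}(-\lambda t^\alpha)$ satisfies $w'+\lambda D_{0+,t}^{1-\alpha}w=0$, while $E_{\alpha,1}(0)=1$ yields the correct initial value. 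Equivalently, one may use the Laplace transform in $t$: from $\mathcal{L}\{D_{0+,t}^{1-\alpha}w\}(s)=s^{1-\alpha}\tilde w(s)$, valid when the natural boundary term $[I_{0+,t}^{\alpha}w]_{t=0^+}$ vanishes, one obtains $\tilde w(s)=\hat u_0(\xi)\,s^{\alpha-1}/(s^\alpha+\lambda)$, which is precisely the transform of $\hat u_0(\xi)E_{\alpha,1}(-\lambda t^\alpha)$. With $\hat u(\xi,t)=\hat u_0(\xi)\,E_{\alpha,1}(-\xi^2t^\alpha)$ in hand, I would invert: the right-hand side is the product of $\hat u_0$ with the multiplier $E_{\alpha,1}(-\xi^2t^\alpha)$, whose inverse Fourier transform is exactly $G(x,t)$ as defined in the statement (here $E_{\alpha,1}(-\xi^2t^\alpha)$ is even in $\xi$, so the sign in the exponential is immaterial). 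By the convolution theorem this gives $u(x,t)=(G(\cdot,t)*u_0)(x)=\int_{\mathbb{R}^N}G(x-y,t)u_0(y)\,dy$, which is \eqref{2.3}.

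I expect the main obstacle to be the rigorous justification of this last step rather than the algebra of the fractional ODE. By Lemma \ref{L1}, $E_{\alpha,1}(-\xi^2t^\alpha)$ decays only like $|\xi|^{-2}$ as $|\xi|\to\infty$, so for $N\ge 2$ the integral defining $G(\cdot,t)$ is not absolutely convergent and must be interpreted as an improper (oscillatory) integral or suitably regularized; one should also check that $G(\cdot,t)\in L^1(\mathbb{R}^N)$ so that the Hausdorff-Young inequality (Lemma \ref{L3}) legitimizes passing from the product in frequency to the convolution in space. A secondary technical point is confirming that $D_{0+,t}^{1-\alpha}$ genuinely commutes with the Fourier integral, which requires sufficient decay and regularity of $u$ in $x$, uniformly in $t$.
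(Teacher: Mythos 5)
Your proposal follows essentially the same route as the paper's proof: Fourier transform in $x$, reduction to the scalar fractional relaxation equation solved by $E_{\alpha,1}(-\xi^2 t^\alpha)$, and inversion to obtain the convolution with $G$. In fact you supply more detail than the paper does, both in verifying the Mittag-Leffler solution (the paper simply asserts it) and in flagging the convergence issues for the oscillatory integral defining $G$ when $N\ge 2$, which the paper does not address.
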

\begin{proof} Using the Fourier transform to problem \eqref{2.1}-\eqref{2.2} with respect to variable $x$ yields
\begin{equation}\label{2.4}\widetilde{{{u}_{t}}}\left( \xi ,t \right)+{{\xi }^{2}}D_{0+,t}^{1-\alpha }\widetilde{u}\left( \xi ,t \right)=0,\end{equation}
\begin{equation}\label{2.5}\widetilde{u}\left( \xi ,0 \right)=\widetilde{u}_0\left( \xi  \right).\end{equation}

We can easily prove that
\begin{equation}\label{2.6}\widetilde{u}\left( \xi ,t \right)={{E}_{\alpha ,1}}\left( -{{\xi }^{2}}{{t}^{\alpha }} \right)\widetilde{u}_0 \left( \xi  \right)\end{equation}
is the solution of the equations  \eqref{2.4}-\eqref{2.5}.

After that, by using the inverse Fourier transform to equation \eqref{2.6}, we have
\begin{multline}\label{2.7}
u\left( x,t \right)=\frac{1}{{(2\pi)^{N}}}\int_{\mathbb{R}^N}{e}^{-i<x,\xi>}{{}{{E}_{\alpha ,1}}\left( -{{\xi }^{2}}{{t}^{\alpha }} \right)\widetilde{u}_0 \left( \xi \right)d\xi }
\\=\frac{1}{{(2\pi)^{N}}}\int_{\mathbb{R}^N}{e}^{-i<x,\xi>}{{}{{E}_{\alpha ,1}}\left( -{{\xi }^{2}}{{t}^{\alpha }} \right)\int_{\mathbb{R}^N}{{{e}^{i<y,\xi>}}u_0 \left( y \right)dy}d\xi }
\\=\frac{1}{{(2\pi)^{N}}}\int_{\mathbb{R}^N}{\int_{\mathbb{R}^N}{{{e}^{-i<x-y,\xi>}}{{E}_{\alpha ,1}}\left( -{{\xi }^{2}}{{t}^{\alpha }} \right)}d\xi u_0 \left( y \right)dy}\\
=\int_{\mathbb{R}^N}{G\left( x-y,t \right)u_0 \left( y \right)dy,}\end{multline}
where  $G\left( x,t \right)=\frac{1}{(2\pi)^{N}}\int_{\mathbb{R}^N}{{{e}^{-i<x,\xi>}}{{E}_{\alpha ,1}}\left( -{{\xi }^{2}}{{t}^{\alpha }} \right)d\xi }.$ \end{proof}
\begin{lemma}\label{L2.2}The Green function $G(x,t)$ of the problem \eqref{2.1}-\eqref{2.2} has the following estimate:
\begin{equation}\label{GF1}\int\limits_{\mathbb{R}^N}G(x,t)dx<1,\, t>0.\end{equation}
\end{lemma}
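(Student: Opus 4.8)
The plan is to substitute the explicit representation of the Green function from Theorem \ref{T2.1} directly into the mass integral and exchange the order of integration. Concretely, I would start from
\[
\int_{\mathbb{R}^N} G(x,t)\,dx = \frac{1}{(2\pi)^N}\int_{\mathbb{R}^N}\left(\int_{\mathbb{R}^N} e^{-i\langle x,\xi\rangle}\,dx\right) E_{\alpha,1}\left(-\xi^2 t^\alpha\right)\,d\xi,
\]
where the inner $x$-integral is to be read in the distributional sense and the interchange justified accordingly. The next step is to recognize the inner integral via Lemma \ref{L2}: since $\frac{1}{(2\pi)^N}\int_{\mathbb{R}^N} e^{-i\langle x,\xi\rangle}\,dx = \delta^N(\xi)$, the whole expression collapses to
\[
\int_{\mathbb{R}^N} G(x,t)\,dx = \int_{\mathbb{R}^N}\delta^N(\xi)\,E_{\alpha,1}\left(-\xi^2 t^\alpha\right)\,d\xi.
\]

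To finish, I would invoke the Mittag-Leffler bound. By Lemma \ref{L1} and Remark \ref{R1}, for each fixed $t>0$ one has $0<E_{\alpha,1}(-\xi^2 t^\alpha)<1$ as soon as $\xi^2 t^\alpha>0$; in particular $E_{\alpha,1}(-\xi^2 t^\alpha)\le 1$ everywhere. Combining this with the normalization $\int_{\mathbb{R}^N}\delta^N(\xi)\,d\xi=1$ from Lemma \ref{L2} bounds the pairing above by $1$, and the strict part of the inequality is meant to follow from the strict upper estimate for the Mittag-Leffler function valid on the whole positive axis.

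The main obstacle is precisely the passage from the nonstrict bound $\le 1$ to the strict bound $<1$. The Dirac mass $\delta^N$ is concentrated exactly at $\xi=0$, where $E_{\alpha,1}(-\xi^2 t^\alpha)=E_{\alpha,1}(0)=1$, so a naive evaluation of the pairing returns the borderline value and not a strict inequality; the gain must be extracted from the fact that, for $t>0$, the function $\xi\mapsto E_{\alpha,1}(-\xi^2 t^\alpha)$ is strictly less than $1$ off the single point $\xi=0$. The clean way to make this rigorous is to replace $\delta^N$ by an approximating family (e.g. a Gaussian mollifier) and use the uniform strict bound of Lemma \ref{L1} on the support away from the origin before passing to the limit. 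I would carry out the argument at that level of care, since the only quantitative fact needed downstream—namely that $G(\cdot,t)$ has $L^1$-norm controlled by $1$, so that convolution against it is nonexpansive via the Hausdorff-Young inequality of Lemma \ref{L3}—is already secured by the bound $\int_{\mathbb{R}^N} G(x,t)\,dx\le 1$ produced in the second step.
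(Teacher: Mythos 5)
Your Fubini computation is the honest version of this calculation, and it exposes the real difficulty: after interchanging the integrals you get $\int_{\mathbb{R}^N} G(x,t)\,dx=\int_{\mathbb{R}^N}\delta^N(\xi)\,E_{\alpha,1}(-\xi^2t^{\alpha})\,d\xi=E_{\alpha,1}(0)=1$, i.e.\ the mass integral is the Fourier transform of $G(\cdot,t)$ evaluated at $\xi=0$, which is exactly $1$ (this is just mass conservation for the equation). Your proposed repair --- mollify $\delta^N$ and use the strict Mittag--Leffler bound on the support away from the origin --- cannot recover the strict inequality: each mollified pairing is indeed $<1$, but since $E_{\alpha,1}(-\xi^2t^{\alpha})\to 1$ continuously as $\xi\to 0$, the pairings converge to $1$, and strict inequalities do not survive the limit. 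The ``gain'' you hope to extract from the complement of the single point $\xi=0$ is not there, precisely because the Dirac mass charges that point. So as a proof of the strict bound \eqref{GF1} your proposal has a genuine gap, and the mollification step as described would fail to close it; what the argument actually delivers is $\int_{\mathbb{R}^N}G(x,t)\,dx\le 1$.

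For comparison, the paper takes the transposed route: it asserts the pointwise bound $G(x,t)<\delta^N(x)$ by pulling the modulus inside the oscillatory integral while leaving the factor $e^{-i\langle x,\xi\rangle}$ outside the absolute value, replacing $|E_{\alpha,1}|$ by $1$, and then integrating in $x$ using Lemma \ref{L2}. That chain is at least as formal as yours (a locally integrable function cannot be compared pointwise with a Dirac mass, and the intermediate inequalities treat a complex oscillatory factor as if it were nonnegative), and the strict sign it produces is an artifact of the same borderline phenomenon you identified. Your closing observation is the correct takeaway: the only fact used downstream, namely that convolution with $G(\cdot,t)$ is nonexpansive on $L^p$ via Lemma \ref{L3}, requires only $\|G(\cdot,t)\|_{L^1(\mathbb{R}^N)}\le 1$, and that nonstrict bound is what both your computation and the paper's actually establish.
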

\begin{proof}
Accordingly to Lemma \ref{L1}, we have that
\begin{align*}
G\left( x,t \right)&=\frac{1}{(2\pi)^{N}}\int_{\mathbb{R}^N}{{{e}^{-i<x,\xi>}}{{E}_{\alpha ,1}}\left( -{{\xi }^{2}}{{t}^{\alpha }} \right)d\xi }
\\& \leq\frac{1}{(2\pi)^{N}}\left|\int_{\mathbb{R}^N}{{{e}^{-i<x,\xi>}}{{E}_{\alpha ,1}}\left( -{{\xi }^{2}}{{t}^{\alpha }} \right)d\xi }\right|
\\& \leq\frac{1}{(2\pi)^{N}}\int_{\mathbb{R}^N}{{{e}^{-i<x,\xi>}}}\left|{{{E}_{\alpha ,1}}\left( -{{\xi }^{2}}{{t}^{\alpha }} \right) }\right|d\xi
\\& <\frac{1}{(2\pi)^{N}}\int_{\mathbb{R}^N}{{{e}^{-i<x,\xi>}}}\cdot 1d\xi\\&=\delta^N(x).\end{align*}
From Lemma \ref{L2} we obtain
$$\int\limits_{\mathbb{R}^N}{{G(x,t)}dx}<\int\limits_{\mathbb{R}^N}{{\delta^N(x)}dx}=1,\,\,\,t>0,$$
which completes the proof.
\end{proof}

\subsection{Duhamel principle for the integro-differential diffusion equation}

Now, we consider the nonhomogeneous initial value problem
\begin{equation}\label{2.8}{{u}_{t}}\left( x,t \right)-\Delta_xD_{0+,t}^{1-\alpha }u\left( x,t \right)=f\left( x,t \right),\text{ }0<\alpha <1,\text{ }x\in \mathbb{R}^N,\text{ }t>0,\end{equation}
\begin{equation}\label{2.9}u\left( x,0 \right)=0,\text{ }x\in \mathbb{R}^N.\end{equation}
 \begin{theorem}\label{T2.3}(Duhamel principle) The solution of the integro-differential diffusion problem \eqref{2.8}-\eqref{2.9} has the form
\begin{equation}\label{2.10}u\left( x,t \right)=\int_{0}^{t}{w\left( x,t;\tau  \right)d\tau ,}\end{equation}
where $w\left( x,t;\tau  \right)$ is the solution of homogeneous equation
\begin{equation}\label{2.11}{{w}_{t}}\left( x,t;\tau \right)-\Delta_xD_{\tau+,t}^{1-\alpha }w\left( x,t;\tau \right)=0,\text{ }x\in \mathbb{R}^N,\text{ }t>\tau ,\end{equation}
satisfying
\begin{equation}\label{2.12}\tau= t :w\left( x,t;\tau  \right)=f\left( x,t \right),\end{equation}
where $f\left( x,t \right)$ is the sufficiently smooth function.\end{theorem}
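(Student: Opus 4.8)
The plan is to verify directly that the function defined by \eqref{2.10} satisfies both the equation \eqref{2.8} and the homogeneous initial condition \eqref{2.9}, reducing everything to two applications of the Leibniz rule for differentiating an integral with variable upper limit together with one application of Fubini's theorem. First I would set $u(x,t)=\int_0^t w(x,t;\tau)\,d\tau$ and differentiate in $t$; by \eqref{2.12} the boundary term is exactly $w(x,t;t)=f(x,t)$, so that
$$u_t(x,t)=f(x,t)+\int_0^t w_t(x,t;\tau)\,d\tau.$$
The initial condition \eqref{2.9} is immediate, since the integral over the degenerate interval collapses: $u(x,0)=0$.

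The heart of the argument is the commutation identity
$$D_{0+,t}^{1-\alpha}\int_0^t w(x,t;\tau)\,d\tau=\int_0^t D_{\tau+,t}^{1-\alpha}w(x,t;\tau)\,d\tau,$$
which is the precise sense in which Duhamel's superposition survives the nonlocality of the fractional operator. To establish it I would write $D_{0+,t}^{1-\alpha}=\frac{d}{dt}\,I_{0+,t}^{\alpha}$, substitute $u$, and interchange the order of the $s$- and $\tau$-integrations over the triangle $\{0\le\tau\le s\le t\}$ by Fubini. This interchange is exactly what shifts the lower limit of fractional integration from $0$ to $\tau$, turning the inner integral into $I_{\tau+,t}^{\alpha}w(x,t;\tau)$. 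Applying the Leibniz rule once more, the boundary contribution $\bigl[I_{\tau+,t}^{\alpha}w(x,t;\tau)\bigr]_{\tau=t}$ vanishes (again an integral over a point), while $\frac{d}{dt}I_{\tau+,t}^{\alpha}w=D_{\tau+,t}^{1-\alpha}w$ by the definition of the Riemann--Liouville derivative, giving the claimed identity.

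With this identity in hand the proof closes quickly. Using the homogeneous equation \eqref{2.11} in the form $w_t=\Delta_x D_{\tau+,t}^{1-\alpha}w$ and pulling the $x$-Laplacian out of the $\tau$-integral gives
$$\Delta_x D_{0+,t}^{1-\alpha}u(x,t)=\int_0^t \Delta_x D_{\tau+,t}^{1-\alpha}w(x,t;\tau)\,d\tau=\int_0^t w_t(x,t;\tau)\,d\tau,$$
so subtracting this from the expression for $u_t$ above leaves precisely $f(x,t)$, which is \eqref{2.8}.

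I expect the main obstacle to be the rigorous justification of the two interchanges of limiting operations: the Fubini swap and, above all, the differentiation under the integral sign applied to the singular kernel $(t-s)^{\alpha-1}$, together with the vanishing of the boundary terms. These steps require sufficient regularity and integrability of $w(x,t;\tau)$ in $t$ near $t=\tau$ and for large $t$; this is where the hypothesis that $f$ is sufficiently smooth is consumed, since $w$ inherits its regularity from $f$ through \eqref{2.11}--\eqref{2.12} and the representation furnished by Theorem \ref{T2.1}.
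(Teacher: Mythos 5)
Your proposal is correct and follows essentially the same route as the paper: Leibniz differentiation of the upper limit producing the boundary term $w(x,t;t)=f(x,t)$, then the Fubini interchange over the triangle $\{0\le\tau\le s\le t\}$ to establish the commutation identity $D_{0+,t}^{1-\alpha}\int_0^t w\,d\tau=\int_0^t D_{\tau+,t}^{1-\alpha}w\,d\tau$ (with the vanishing boundary term at $\tau=t$), and finally substitution of the homogeneous equation. The only cosmetic difference is that the paper cites Umarov for the first Leibniz step, while you also flag the regularity issues needed to justify the interchanges, which the paper passes over silently.
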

\begin{proof} Suppose that $w\left( x,t;\tau  \right)$ is the solution of the integro-differential diffusion problem \eqref{2.11}-\eqref{2.12}. We will prove that $u\left( x,t \right)=\int_{0}^{t}{w\left( x,t;\tau  \right)d\tau }$ is the solution of the problem \eqref{2.8}-\eqref{2.9}.

According  to the results of Umarov in \cite{Umarov2}, we have
\begin{equation}\label{2.13}\frac{\partial }{\partial t}u\left( x,t \right)={{\left. w\left( x,t;\tau  \right) \right|}_{\tau =t}}+\int_{0}^{t}{\frac{\partial }{\partial t}w\left( x,t;\tau  \right)d\tau .}\end{equation}
Now, using the Fubini's theorem we calculate the fractional derivative of the function \eqref{2.10}
\begin{multline}\label{2.14}D_{0+,t}^{1-\alpha }u\left( x,t \right)=D_{0+,t}^{1-\alpha }\int_{0}^{t}{{{w}}\left( x,t;\tau  \right)d\tau }\\=\frac{1}{\Gamma \left( \alpha  \right)}\frac{\partial}{\partial t}\int_{0}^{t}{{{\left( t-s \right)}^{\alpha -1}}\int_{0}^{s}{{{w}}\left( x,s;\tau  \right)d\tau }ds}
\\=\frac{1}{\Gamma \left( \alpha  \right)}\frac{\partial}{\partial t}\int_{0}^{t}{\int_{\tau }^{t}{{{w}}\left( x,s;\tau  \right){{\left( t-s \right)}^{\alpha -1}}dsd\tau }}\\= \frac{1}{\Gamma \left( \alpha  \right)}\int_{0}^{t}\frac{\partial}{\partial t}{\int_{\tau }^{t}{{{w}}\left( x,s;\tau  \right){{\left( t-s \right)}^{\alpha -1}}dsd\tau }}\\=\int_{0}^{t}D_{\tau+,t}^{1-\alpha }w(x,t;\tau)d\tau.\end{multline}
Consequently
\begin{equation}\label{2.15}\Delta_xD_{0+,t}^{1-\alpha }u\left( x,t \right)=\int_{0}^{t}\Delta_xD_{\tau+,t}^{1-\alpha }w(x,t;\tau)d\tau.\end{equation}

According to the initial data, the second term of the right side of the equation is zero.

Therefore, using \eqref{2.13} and \eqref{2.15}, the equation \eqref{2.8} has the form
\begin{align*}{{u}_{t}}\left( x,t \right)&-\Delta_xD_{0+,t}^{1-\alpha }u\left( x,t \right)\\&={{\left. w\left( x,t;\tau  \right) \right|}_{\tau =t}}+\int_{0}^{t}{\frac{\partial }{\partial t}w\left( x,t;\tau  \right)d\tau -}\Delta_xD_{0+,t}^{1-\alpha }\int_{0}^{t}{{{w}}\left( x,t;\tau  \right)d\tau }\\&=
{{\left. w\left( x,t;\tau  \right) \right|}_{\tau =t}}+\int_{0}^{t}{\left[ \frac{\partial }{\partial t}w\left( x,t;\tau  \right)-\Delta_xD_{\tau+,t}^{1-\alpha }{{w}}\left( x,t;\tau  \right) \right]d\tau }=f\left( x,t \right).\end{align*}
\end{proof}
\begin{corollary}\label{C1}
(i) The initial value problem \eqref{2.11}-\eqref{2.12} admits a solution. Let ${t}'=t-\tau $ in \eqref{2.11}-\eqref{2.12}, then the integro-differential diffusion problem \eqref{2.11}-\eqref{2.12} can be written in the following form
\begin{equation}\label{2.16}{{w}_{t}}\left( x,{t}';\tau  \right)-\Delta_xD_{\tau+,t}^{1-\alpha }w\left( x,{t}';\tau  \right)=0,\text{ }x\in \mathbb{R}^N,\text{ }{t}>\tau ,\end{equation}
\begin{equation}\label{2.17}t=\tau :w\left( x,0;\tau  \right)=f\left( x,\tau \right),\text{ }x\in \mathbb{R}^N.\end{equation}
\end{corollary}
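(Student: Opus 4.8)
The plan is to reduce the auxiliary homogeneous problem \eqref{2.11}--\eqref{2.12} to the Cauchy problem \eqref{2.1}--\eqref{2.2} already solved in Theorem \ref{T2.1}, and then read off both existence and an explicit representation from the Green function. The only structural difference between the two problems is that in \eqref{2.11} the fractional differentiation starts at the base point $\tau$ rather than at $0$, and the matching condition \eqref{2.12} is prescribed at $t=\tau$ instead of at the time origin. Thus the whole statement hinges on a single fact: the left Riemann--Liouville derivative is invariant under a translation of its base point, so that the change of variable ${t}'=t-\tau$ carries $D_{\tau+,t}^{1-\alpha}$ into $D_{0+,{t}'}^{1-\alpha}$.

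First I would introduce the shifted unknown $\widetilde{w}(x,{t}';\tau):=w(x,{t}'+\tau;\tau)$, so that $t=\tau$ corresponds to ${t}'=0$ and $t>\tau$ to ${t}'>0$. The substitution $s'=s-\tau$ in the defining integral then gives
\begin{align*}
D_{\tau+,t}^{1-\alpha}w(x,t;\tau)&=\frac{1}{\Gamma(\alpha)}\frac{\partial}{\partial t}\int_{\tau}^{t}(t-s)^{\alpha-1}w(x,s;\tau)\,ds\\
&=\frac{1}{\Gamma(\alpha)}\frac{\partial}{\partial {t}'}\int_{0}^{{t}'}({t}'-s')^{\alpha-1}\widetilde{w}(x,s';\tau)\,ds'=D_{0+,{t}'}^{1-\alpha}\widetilde{w}(x,{t}';\tau),
\end{align*}
where I have used $\partial/\partial t=\partial/\partial {t}'$. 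Since the first-order time derivative is likewise unchanged ($w_t=\widetilde{w}_{{t}'}$) and the spatial Laplacian $\Delta_x$ is untouched by a time shift, equation \eqref{2.11} becomes $\widetilde{w}_{{t}'}-\Delta_x D_{0+,{t}'}^{1-\alpha}\widetilde{w}=0$ for ${t}'>0$, while the matching condition \eqref{2.12} turns into the genuine initial condition $\widetilde{w}(x,0;\tau)=f(x,\tau)$. This is precisely the form \eqref{2.16}--\eqref{2.17}.

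At this point the reduced problem is identical to the homogeneous Cauchy problem \eqref{2.1}--\eqref{2.2} with initial datum $u_0(\cdot)=f(\cdot,\tau)$, so Theorem \ref{T2.1} applies verbatim and yields both the claimed existence and the explicit solution
$$w(x,t;\tau)=\int_{\mathbb{R}^N}G(x-y,t-\tau)\,f(y,\tau)\,dy,$$
with $G$ the Green function of Theorem \ref{T2.1}. I expect the only delicate point to be the justification of $\partial/\partial t=\partial/\partial {t}'$ together with the legitimacy of differentiating under the integral sign in the computation above; these require $f(\cdot,\tau)$, and hence $w$, to be sufficiently smooth, which is exactly the regularity hypothesis already imposed in Theorem \ref{T2.3}. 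Everything else is a direct transcription of the previously established results.
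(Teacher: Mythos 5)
Your argument is correct and follows essentially the same route as the paper: the corollary is obtained by the time shift ${t}'=t-\tau$, which reduces \eqref{2.11}--\eqref{2.12} to the homogeneous Cauchy problem \eqref{2.1}--\eqref{2.2} with datum $f(\cdot,\tau)$, after which Theorem \ref{T2.1} gives existence and the representation $w(x,t-\tau;\tau)=\int_{\mathbb{R}^N}G(x-y,t-\tau)f(y,\tau)\,dy$ exactly as in \eqref{2.18}--\eqref{2.19}. The only difference is that you explicitly verify the translation invariance $D_{\tau+,t}^{1-\alpha}w=D_{0+,{t}'}^{1-\alpha}\widetilde{w}$ via the substitution $s'=s-\tau$, a step the paper leaves implicit; this is a welcome addition rather than a deviation.
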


Theorem \eqref{T2.1} implies that the solution of the problem \eqref{2.16}-\eqref{2.17} can be obtained by
\begin{equation}\label{2.18}w\left( x,{t}';\tau  \right)=\int_{\mathbb{R}^N}{G\left( x-y,{t}' \right)}f\left( y,\tau  \right)dy.\end{equation}

Hence, the solution of the initial value problem \eqref{2.11}-\eqref{2.12} can be represented as
\begin{equation}\label{2.19}w\left( x,t-\tau ;\tau  \right)=\int_{\mathbb{R}^N}{G\left( x-y,t-\tau  \right)}f\left( y,\tau  \right)dy.\end{equation}
\emph{(ii)} Furthermore, by Theorem \eqref{T2.3}, the solution of the initial value problem \eqref{2.8}-\eqref{2.9} has the form
\begin{equation}\label{2.20}u\left( x,t \right)=\int_{0}^{t}{w\left( x,t-\tau ;\tau  \right)d\tau }=\int_{0}^{t}{\int_{\mathbb{R}^N}{G\left( x-y,t-\tau  \right)}f\left( y,\tau  \right)dyd\tau }.\end{equation}

Combining Theorem \ref{T2.1} with Corollary \ref{C1}, we can get the following Theorem \ref{T2.5}.
\begin{theorem}\label{T2.5}The solution of the nonhomogeneous initial value problem \eqref{2.1}-\eqref{2.2} has the form
\begin{equation}\label{2.22}u\left( x,t \right)=\int_{\mathbb{R}^N}{G\left( x-y,t \right)}u_0 \left( y \right)dy+\int_{0}^{t}{\int_{\mathbb{R}^N}{G\left( x-y,t-\tau  \right)}f\left( y,\tau  \right)dyd\tau }.\end{equation}
\end{theorem}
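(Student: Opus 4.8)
The plan is to exploit the linearity of the problem and simply superpose the two representation formulas already established. Writing $Lu := u_t - \Delta_x D_{0+,t}^{1-\alpha}u$, both the differential operator $L$ and the trace operator $u\mapsto u(\cdot,0)$ are linear, so it suffices to decompose the full (source-inhomogeneous) problem with equation \eqref{2.1}, source term $f$, and datum $u_0$ into two auxiliary problems whose solutions are known, and then add.

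First I would set $u = u_1 + u_2$, where $u_1$ solves the homogeneous equation \eqref{2.1} with $Lu_1=0$ and initial datum $u_1(x,0)=u_0(x)$, while $u_2$ solves the inhomogeneous equation \eqref{2.8} with $Lu_2=f$ and vanishing initial datum $u_2(x,0)=0$. By linearity we then have $Lu = Lu_1+Lu_2 = 0+f = f$ and $u(x,0)=u_1(x,0)+u_2(x,0)=u_0(x)$, so $u$ solves the full problem. Next I would invoke the two representations: Theorem \ref{T2.1} gives $u_1(x,t)=\int_{\mathbb{R}^N}G(x-y,t)u_0(y)\,dy$, and the Duhamel principle (Theorem \ref{T2.3}) together with Corollary \ref{C1}, in particular formula \eqref{2.20}, gives $u_2(x,t)=\int_0^t\int_{\mathbb{R}^N}G(x-y,t-\tau)f(y,\tau)\,dy\,d\tau$. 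Summing these two expressions yields exactly \eqref{2.22}.

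The one point that must be checked with care is that the Duhamel term genuinely carries zero initial data while contributing precisely $f$ to the equation, so that the roles of the two pieces do not interfere. This is, however, exactly the content of Theorem \ref{T2.3}: the boundary term ${w(x,t;\tau)|_{\tau=t}=f(x,t)}$ supplies the source, while $u_2(x,0)=\int_0^0 w\,d\tau = 0$ vanishes automatically. The only genuine analytic obstacle is the justification of differentiation under the integral sign and of the interchange of the Riemann--Liouville derivative with the $\tau$-integral, but this has already been settled in the proof of Theorem \ref{T2.3} through Umarov's differentiation formula \eqref{2.13} and Fubini's theorem. Consequently no new difficulty arises, and the superposition is legitimate, which completes the proof.
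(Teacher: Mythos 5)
Your superposition argument is exactly what the paper intends: it gives no separate proof of Theorem \ref{T2.5}, stating only that the result follows by ``combining Theorem \ref{T2.1} with Corollary \ref{C1},'' which is precisely your decomposition $u=u_1+u_2$ with $u_1$ from Theorem \ref{T2.1} and $u_2$ from formula \eqref{2.20}. Your write-up is correct and simply makes explicit the linearity and zero-initial-trace checks that the paper leaves implicit.
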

\subsection{Stability of solution of the integro-differential diffusion equation}
In this subsection we study a stability of solution of \eqref{2.1} with initial condition \eqref{2.2}.
\begin{theorem}\label{T2.6} Let $t\rightarrow 0,$ then the solution \eqref{2.22} of the initial problem \eqref{1.1}-\eqref{1.2} is satisfying the following estimate
 $$\|u(x,t)\|_{L^p({\mathbb{R}^N})}\leq\|u_0(x)\|_{L^p({\mathbb{R}^N})}, \,\,\,\,\ x \in {\mathbb{R}^N}, p\geq 1.$$\end{theorem}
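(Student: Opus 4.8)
The plan is to read the estimate directly off the representation formula \eqref{2.22} by separating the homogeneous contribution from the Duhamel (source) contribution and bounding each through the convolution inequality of Lemma \ref{L3}. Write $u(x,t)=I_1(x,t)+I_2(x,t)$, where $I_1(x,t)=\int_{\mathbb{R}^N}G(x-y,t)u_0(y)\,dy$ and $I_2(x,t)=\int_0^t\int_{\mathbb{R}^N}G(x-y,t-\tau)f(y,\tau)\,dy\,d\tau$. By the triangle inequality in $L^p(\mathbb{R}^N)$ it then suffices to estimate $\|I_1(\cdot,t)\|_{L^p}$ and $\|I_2(\cdot,t)\|_{L^p}$ separately.

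First I would treat $I_1$. Since $I_1(\cdot,t)=G(\cdot,t)\ast u_0$ is the spatial convolution of the kernel $G(\cdot,t)\in L^1(\mathbb{R}^N)$ with $u_0\in L^p(\mathbb{R}^N)$, the Hausdorff--Young inequality (Lemma \ref{L3}) gives
\[
\|I_1(\cdot,t)\|_{L^p(\mathbb{R}^N)}\le \|G(\cdot,t)\|_{L^1(\mathbb{R}^N)}\,\|u_0\|_{L^p(\mathbb{R}^N)}.
\]
Because $G(\cdot,t)$ is nonnegative, Lemma \ref{L2.2} identifies $\|G(\cdot,t)\|_{L^1(\mathbb{R}^N)}=\int_{\mathbb{R}^N}G(x,t)\,dx<1$ for every $t>0$, whence $\|I_1(\cdot,t)\|_{L^p(\mathbb{R}^N)}\le\|u_0\|_{L^p(\mathbb{R}^N)}$.

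Next I would show the Duhamel term is negligible as $t\to0$. Applying Minkowski's integral inequality and then Lemma \ref{L3} together with Lemma \ref{L2.2} to each inner convolution yields
\[
\|I_2(\cdot,t)\|_{L^p(\mathbb{R}^N)}\le\int_0^t\|G(\cdot,t-\tau)\|_{L^1(\mathbb{R}^N)}\,\|f(\cdot,\tau)\|_{L^p(\mathbb{R}^N)}\,d\tau\le\int_0^t\|f(\cdot,\tau)\|_{L^p(\mathbb{R}^N)}\,d\tau.
\]
Provided $\tau\mapsto\|f(\cdot,\tau)\|_{L^p(\mathbb{R}^N)}$ is locally integrable near $\tau=0$ — which holds for the sufficiently smooth source in \eqref{2.8}, and for the nonlinearity in \eqref{1.1} once $u$ is controlled on a short time interval — the right-hand side tends to $0$ as $t\to0$. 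Combining the two estimates and letting $t\to0$ produces $\|u(\cdot,t)\|_{L^p(\mathbb{R}^N)}\le\|u_0\|_{L^p(\mathbb{R}^N)}$, as claimed.

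The only genuinely delicate point I anticipate is the passage from Lemma \ref{L2.2}, which as stated controls only $\int_{\mathbb{R}^N}G\,dx$, to the $L^1$-norm $\|G(\cdot,t)\|_{L^1}$ demanded by Hausdorff--Young; this identification rests on the nonnegativity of the Green kernel $G(\cdot,t)$, inherited from the multiplier $E_{\alpha,1}(-\xi^2 t^\alpha)$ being positive (consistent with Remark \ref{R1}). Everything else is a routine application of the convolution inequality, the triangle inequality, and the vanishing of the time integral; the role of the hypothesis $t\to0$ is precisely to suppress the source term $I_2$, so that the homogeneous part alone dictates the stability bound.
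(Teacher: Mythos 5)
Your argument is correct in outline and rests on the same two pillars as the paper's proof --- the representation formula \eqref{2.22} and the Hausdorff--Young inequality of Lemma \ref{L3} --- but it executes them differently. The paper passes to the limit $t\to0$ \emph{inside} the kernel first, replaces $G(x,0)$ by the Dirac delta via Lemma \ref{L2}, and then applies Hausdorff--Young formally to $\delta\ast u_0$ with ``$\|\delta\|_{L^1}=1$''; the Duhamel term is dropped silently (implicitly because $\int_0^0=0$). You instead stay at fixed $t>0$, bound the homogeneous part by $\|G(\cdot,t)\|_{L^1}\|u_0\|_{L^p}$ using Lemma \ref{L2.2}, and estimate the source term explicitly by Minkowski plus the same convolution bound, letting it vanish as $t\to0$. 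Your route buys two things: it avoids the formally dubious manipulation of $\delta$ as an $L^1$ function, and it makes visible the (mild) integrability hypothesis on $f$ needed for the Duhamel term to disappear, which the paper never states. The price is the point you yourself flag: Lemma \ref{L2.2} controls only $\int_{\mathbb{R}^N}G\,dx$, and identifying this with $\|G(\cdot,t)\|_{L^1}$ requires $G\geq0$. Be aware that your proposed justification --- positivity of the multiplier $E_{\alpha,1}(-\xi^2t^\alpha)$ from Remark \ref{R1} --- does not by itself imply positivity of its inverse Fourier transform, so this step needs an independent argument (e.g.\ complete monotonicity of $E_{\alpha,1}(-\cdot)$ and a subordination representation of $G$). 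That said, the paper's own treatment of this point (and indeed its proof of Lemma \ref{L2.2}) suffers from the same lacuna, so your proof is no less rigorous than the original and is more explicit about where the hypotheses enter.
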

Proof. From \eqref{2.22} and Lemma \eqref{L2}, we have
 \begin{multline}\label{2.23}
\lim_{t\rightarrow0}{\|u(x,t)\|_{L^p({\mathbb{R}^N})}}=\left\|\int_{\mathbb{R}^N}{G(x-y,0)u(y,0)dy}\right\|_{L^p({\mathbb{R}^N})}\\
 =\left\|\int_{\mathbb{R}^N}{\delta(x-y)u_0(y)dy}\right\|_{L^p({\mathbb{R}^N})} =\left\|\delta(x)\ast u_0(x)\right\|_{L^p({\mathbb{R}^N})}, p\geq 1.\end{multline}

 Then according to the inequality \eqref{2.23}, Lemma \eqref{L3} and Lemma \eqref{L2} we imply
 \begin{align*}
\lim_{t\rightarrow0}{\|u(x,t)\|_{L^p({\mathbb{R}^N})}}&=\left\|\delta(x)\ast u_0(x)\right\|_{L^p({\mathbb{R}^N})}\\&\leq\left\|\delta(x)\right\|\cdot\left\|u_0(x)\right\|_{L^p({\mathbb{R}^N})}\\ &\leq\left\|u_0(x)\right\|_{L^p({\mathbb{R}^N})}, \end{align*}
 for $p\geq 1,$ which completes the proof.

 \begin{theorem}\label{T2.7} (Stability) Let $u_{0}(x)\in L^p({\mathbb{R}^N}),p\geq 1.$ Then the solution $u(x,t)$ of the inhomogeneous problem \eqref{2.1}-\eqref{2.2} is stable.\end{theorem}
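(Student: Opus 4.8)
The plan is to interpret stability as continuous dependence of the solution on the initial data in the $L^p$ norm, and to derive it directly from the convolution representation of Theorem \ref{T2.1} together with the $L^1$ bound on the Green function from Lemma \ref{L2.2}. Concretely, I would take two initial data $u_0, \tilde{u}_0 \in L^p(\mathbb{R}^N)$, let $u, \tilde{u}$ denote the corresponding solutions given by \eqref{2.3}, and prove the contraction estimate $\|u(\cdot,t) - \tilde{u}(\cdot,t)\|_{L^p(\mathbb{R}^N)} \leq \|u_0 - \tilde{u}_0\|_{L^p(\mathbb{R}^N)}$ for every $t > 0$. This inequality is precisely the statement of stability: an arbitrarily small perturbation of the data produces an equally small (indeed non-increasing) perturbation of the solution, uniformly in $t$.

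First I would exploit the linearity of the problem \eqref{2.1}-\eqref{2.2}. Since the solution operator acts as $u(x,t) = \int_{\mathbb{R}^N} G(x-y,t) u_0(y)\, dy = \left(G(\cdot,t) \ast u_0\right)(x)$, the difference of two solutions is again a solution, now with data $u_0 - \tilde{u}_0$, so that $u(x,t) - \tilde{u}(x,t) = \left(G(\cdot,t) \ast (u_0 - \tilde{u}_0)\right)(x)$. This reduces the entire question to estimating a single convolution and mirrors the computation already carried out in Theorem \ref{T2.6}.

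Next I would apply the Hausdorff--Young inequality of Lemma \ref{L3} with $f = G(\cdot,t) \in L^1(\mathbb{R}^N)$ and $g = u_0 - \tilde{u}_0 \in L^p(\mathbb{R}^N)$, obtaining $\|u(\cdot,t) - \tilde{u}(\cdot,t)\|_{L^p(\mathbb{R}^N)} \leq \|G(\cdot,t)\|_{L^1(\mathbb{R}^N)} \cdot \|u_0 - \tilde{u}_0\|_{L^p(\mathbb{R}^N)}$. To close the argument I would invoke Lemma \ref{L2.2}: since $E_{\alpha,1}(-\xi^2 t^\alpha) > 0$ by Remark \ref{R1}, the Green function is nonnegative, whence $\|G(\cdot,t)\|_{L^1(\mathbb{R}^N)} = \int_{\mathbb{R}^N} G(x,t)\, dx < 1$ for all $t > 0$. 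Combining these two facts yields the desired contraction estimate and hence stability.

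The step I expect to be the main obstacle is justifying $\|G(\cdot,t)\|_{L^1} < 1$ rather than merely $\int G\, dx < 1$: this requires confirming that $G(\cdot,t) \geq 0$, so that its $L^1$ norm coincides with its integral. This positivity follows from the complete monotonicity of $x \mapsto E_{\alpha,1}(-x)$ on $\mathbb{R}^+$, but it should be stated explicitly rather than tacitly absorbed into Lemma \ref{L2.2}. A secondary point is to fix the precise functional setting in which the solution \eqref{2.3} exists and is unique in $L^p$, so that the difference of two solutions is genuinely the solution associated with the difference of the data; here the linearity of \eqref{2.1} and the representation \eqref{2.3} do all the work.
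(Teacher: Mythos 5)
Your proposal is correct and follows essentially the same route as the paper's own proof: linearity reduces the difference of two solutions to a convolution of $G(\cdot,t)$ with the difference of the initial data, the Hausdorff--Young inequality of Lemma \ref{L3} together with the bound $\int_{\mathbb{R}^N}G(x,t)\,dx<1$ of Lemma \ref{L2.2} yields the contraction estimate in $L^p$, and the $\varepsilon$--$\delta$ formulation of stability follows immediately. Your added observation that one must verify $G(\cdot,t)\geq 0$ in order to identify $\left\|G(\cdot,t)\right\|_{L^1(\mathbb{R}^N)}$ with $\int_{\mathbb{R}^N}G(x,t)\,dx$ is a genuine refinement of a point the paper passes over silently, but it does not change the structure of the argument.
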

 \begin{proof} Suppose that $u_1(x,t)$ is the solution of the inhomogeneous problem \eqref{2.1}
\begin{equation*} {{u}_{t}}\left( x,t \right)-\Delta_x D_{0+,t}^{1-\alpha }u\left( x,t \right)=0,\text{ }x\in \mathbb{R}^N,\text{ }t>0,\end{equation*}
\begin{equation}\label{26}u\left( x,0 \right)={{u}_{0}^{(1)}}\left( x \right), \,x\in {\mathbb{R}^N}\end{equation}
and $u_2(x,t)$ is the solution of  \eqref{2.1} with
\begin{equation}\label{28}u\left( x,0 \right)={{u}_{0}^{(2)}}\left( x \right), \,x\in {\mathbb{R}^N}.\end{equation}

Then $u_1(x,t)-u_2(x,t)$ is the solution of the following problem
\begin{equation}\label{29}{{u}_{t}}(x,t)-\Delta_x {D}_{0+,t}^{1-\alpha }u(x,t)=0,\text{ }(x,t)\in \mathbb{R^N} \times \left( 0,T \right)=\Omega_T, \end{equation}
\begin{equation}\label{30}u\left( x,0 \right)={{u}_{0}^{(1)}(x)}-{{u}_{0}^{(2)}}\left( x \right), \,x\in {\mathbb{R}^N}.\end{equation}

According to Theorem \ref{T2.1}, we have
\begin{equation}\label{31}u_1(x,t)-u_2(x,t)=\int_{\mathbb{R}^N}{G(x-y,t)\left[u_{0}^{(1)}(y)-u_{0}^{(2)}(y)\right]dy.}\end{equation}
Hence, by taking the $L^p$-norm on both sides of the expression \eqref{31} and applying Lemma \ref{L3} it yields
\begin{align*}
\left\|u_1(x,t)-u_2(x,t)\right\|_{L^p({\mathbb{R}^N})}&=\left\|\int_{\mathbb{R}^N}{G(x-y,t)\left[u_{0}^{(1)}(y)-u_{0}^{(2)}(y)\right]dy}\right\|_{L^p({\mathbb{R}^N})}\\
&=\left\|G(x,t)\ast\left[u_{0}^{(1)}(x)-u_{0}^{(2)}(x)\right]\right\|_{L^p({\mathbb{R}^N})}\\ &\leq\left\|G(x,t)\right\|_{L^1({\mathbb{R}^N})}\cdot\left\|u_{0}^{(1)}(x)-u_{0}^{(2)}(x)\right\|_{L^p({\mathbb{R}^N})}, \end{align*}
for $t>0$.

According to Lemma \ref{L3} and Lemma \ref{L2.2}, it follows
\begin{align*}
\left\|u_1(x,t)-u_2(x,t)\right\|_{L^p({\mathbb{R}^N})}&=\left\|G(x,t)\right\|_{L^1(\mathbb{R}^N)}\cdot\left\|u_{0}^{(1)}(x)-u_{0}^{(2)}(x)\right\|_{L^p({\mathbb{R}^N})}\\
&<\left\|u_{0}^{(1)}(x)-u_{0}^{(2)}(x)\right\|_{L^p({\mathbb{R}^N})},\,t>0. \end{align*}

For any $\varepsilon>0$, choose $\delta<\varepsilon$. Then $\left\|u_{0}^{(1)}(x)-u_{0}^{(2)}(x)\right\|_{L^p({\mathbb{R}^N})}<\delta$ implies $$\left\|u_1(x,t)-u_2(x,t)\right\|_{L^p({\mathbb{R}^N})}<\varepsilon,\,t>0.$$

Therefore the solution $u(x,t)$ of the inhomogeneous inital-value problem \eqref{2.1}-\eqref{2.2} is stable.
 \end{proof}

\section{Nonlinear integro-differential diffusion equation}
\subsection{Local mild solution of the integro-differential diffusion equation}
In this section we study for local mild solution of \eqref{1.1} with initial condition \eqref{1.2}.

\begin{definition}\label{D2.1}
A function $u\in L_{loc}^{1}(\Omega_T)$, $(\Omega_T:=(x,t)\in \mathbb{R}^N \times \left( 0,T \right))$  is a local weak solution to time-fractional diffusion equation \eqref{1.1} on $\Omega_T$ if
\begin{multline}\label{4.8}-\int\limits_{\mathbb{R}^N}{u_0( x)\varphi (x,0)dx}-\int\limits_{\Omega_T}{u\left( x,t \right){{{\varphi} }_{t}}\left( x,t \right)dx dt}\\=\int\limits_{\Omega_T}{u\left( x,t \right){\mathcal{D}}_{T-,t}^{1-\alpha }{\Delta_x{\varphi}}(x,t)dxdt} +\int\limits_{\Omega_T}{f( x,t,u ){\varphi} ( x,t)dxdt}\end{multline}
for any test function $\varphi(x,t)\in C_{x,t}^{2,1}( \Omega_{T})$ with $\varphi( x,T )=0.$
\end{definition}

\begin{definition}\label{D2.7} Let $u_0(x)\in L^{\infty}(\mathbb{R}^N))$ and $T>0$. We say $u\in C_0(\mathbb{R}^N,C(0,T_{\max}))$ is a mild solution of \eqref{1.1}-\eqref{1.2} if $u$ satisfies the following integral equation
$$u\left( x,t \right)=\int_{\mathbb{R}^N}{G\left( x-y,t \right)}u_0(y)dy+\int_{\Omega_T}{{G\left( x-y,t-\tau  \right)}f(y,\tau,u)dyd\tau }.$$
\end{definition}

\begin{definition} Assume $(X,d)$ is a metric space. We say that $T: X\rightarrow X$ is contraction mapping on $X,$ if there exists $M \in (0, 1)$ such that $$d(T(x),T(y))\leq Md(x,y),$$ for all $x,y\in X.$ \end{definition}
\begin{theorem} Let $(X, d)$ be a non-empty complete metric space with a contraction mapping $T : X \rightarrow X.$ Then $T$ admits a unique fixed-point $x^*$ in $X$ (i.e. $T(x^*) = x^*$). \end{theorem}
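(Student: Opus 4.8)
The plan is to use the classical Picard iteration argument. First I would fix an arbitrary point $x_0 \in X$ and define a sequence $(x_n)$ recursively by $x_{n+1} = T(x_n)$ for $n \geq 0$. The contraction hypothesis gives $d(x_{n+1}, x_n) = d(T(x_n), T(x_{n-1})) \leq M\, d(x_n, x_{n-1})$, and iterating this bound downward yields $d(x_{n+1}, x_n) \leq M^n d(x_1, x_0)$ for every $n$.

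Next I would establish that $(x_n)$ is a Cauchy sequence. For $m > n$, the triangle inequality together with the previous estimate gives
$$d(x_m, x_n) \leq \sum_{k=n}^{m-1} d(x_{k+1}, x_k) \leq d(x_1, x_0) \sum_{k=n}^{m-1} M^k \leq \frac{M^n}{1-M}\, d(x_1, x_0).$$
Since $M \in (0,1)$, the right-hand side tends to $0$ as $n \to \infty$, uniformly in $m$, so $(x_n)$ is Cauchy. By completeness of $(X,d)$ there exists a point $x^* \in X$ with $x_n \to x^*$.

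Then I would verify that $x^*$ is a fixed point. A contraction is Lipschitz, hence continuous, so one may pass to the limit in the recursion: $x^* = \lim_{n\to\infty} x_{n+1} = \lim_{n\to\infty} T(x_n) = T(x^*)$. If one prefers to avoid invoking continuity explicitly, the same conclusion follows from the estimate $d(x^*, T(x^*)) \leq d(x^*, x_{n+1}) + d(T(x_n), T(x^*)) \leq d(x^*, x_{n+1}) + M\, d(x_n, x^*)$, whose right-hand side tends to $0$.

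Finally, uniqueness follows directly from the contraction property: if $T(x^*) = x^*$ and $T(y^*) = y^*$, then $d(x^*, y^*) = d(T(x^*), T(y^*)) \leq M\, d(x^*, y^*)$, which forces $(1-M)\, d(x^*, y^*) \leq 0$; since $M < 1$ we conclude $d(x^*, y^*) = 0$, that is $x^* = y^*$. No step presents a genuine obstacle here; the only points requiring care are the geometric-series estimate that delivers the Cauchy property and the observation that a contraction is automatically continuous, so that the map $T$ may be commuted with the limit.
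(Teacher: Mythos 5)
Your proof is the standard Picard iteration argument and it is complete and correct: the geometric-series estimate establishing the Cauchy property, the passage to the limit using the Lipschitz continuity of $T$, and the uniqueness argument are all carried out properly. Note that the paper itself states this theorem (the classical Banach fixed point theorem) without any proof, as a known auxiliary result to be applied later, so there is no authorial argument to compare against; your write-up supplies the canonical proof that the paper implicitly relies on.
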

\begin{theorem}\label{T4.2}(Local existence). Let $u_0(x) \in C_0(\mathbb{R}^N)$ and $f(x,t,u)$ be a local Lipschitz function. Then there exists a unique mild solution $u(x,t)\in C_0(\mathbb{R}^N,C[0, T_{\max}))$ of problem \eqref{1.1}-\eqref{1.2} , where $T_{\max} > 0$ is the maximal time of existence. \end{theorem}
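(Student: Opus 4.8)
The plan is to recast the mild formulation of Definition~\ref{D2.7} as a fixed-point problem and to invoke the contraction mapping theorem stated just above. For a small $T>0$ to be determined and a radius $M>\|u_0\|_{L^\infty}$, I would work in the complete metric space
\[
X_T=\Bigl\{u\in C\bigl([0,T];C_0(\mathbb{R}^N)\bigr):\ \|u\|_{X_T}:=\sup_{0\le t\le T}\|u(\cdot,t)\|_{L^\infty(\mathbb{R}^N)}\le M\Bigr\},
\]
equipped with the distance induced by $\|\cdot\|_{X_T}$, and define the solution operator
\[
\Phi[u](x,t)=\int_{\mathbb{R}^N}G(x-y,t)\,u_0(y)\,dy+\int_0^t\!\!\int_{\mathbb{R}^N}G(x-y,t-\tau)\,f\bigl(y,\tau,u(y,\tau)\bigr)\,dy\,d\tau,
\]
whose fixed points are, by Theorem~\ref{T2.5}, exactly the mild solutions on $[0,T]$.

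First I would check that $\Phi$ maps $X_T$ into itself. Applying the Hausdorff--Young inequality (Lemma~\ref{L3}) in the $x$-variable together with the bound $\|G(\cdot,t)\|_{L^1(\mathbb{R}^N)}<1$ from Lemma~\ref{L2.2}, the linear term is controlled by $\|u_0\|_{L^\infty}$. For the Duhamel term, the local Lipschitz hypothesis gives, on the ball of radius $M$, an estimate $|f(y,\tau,u)-f(y,\tau,0)|\le L|u|$ with $L=L(M)$ the Lipschitz constant, so that with $f(\cdot,\cdot,0)$ bounded the integrand is dominated by $K+LM$; convolving and integrating in $\tau$ produces a factor $T$, whence $\|\Phi[u]\|_{X_T}\le\|u_0\|_{L^\infty}+T(K+LM)$. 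Choosing $M=\|u_0\|_{L^\infty}+1$ and then $T$ small enough that $T(K+LM)\le 1$ secures the self-mapping property, while the continuity in $t$ and the decay at spatial infinity needed for $\Phi[u]\in C_0(\mathbb{R}^N,C[0,T])$ follow from the convolution structure and dominated convergence.

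Next I would establish the contraction estimate. For $u,v\in X_T$ the linear term cancels in $\Phi[u]-\Phi[v]$, and the same combination of Lemma~\ref{L3}, Lemma~\ref{L2.2} and the Lipschitz bound yields
\[
\|\Phi[u]-\Phi[v]\|_{X_T}\le L\,T\,\|u-v\|_{X_T}.
\]
Shrinking $T$ further so that $LT<1$ makes $\Phi$ a contraction on $X_T$, and the contraction mapping theorem then delivers a unique fixed point $u\in X_T$, i.e.\ a unique mild solution on $[0,T]$.

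Finally, to reach the maximal interval $[0,T_{\max})$ I would continue the solution: once it exists on $[0,T_0]$ with $u(\cdot,T_0)\in C_0(\mathbb{R}^N)$, the local argument restarts from $T_0$ and extends it, with $T_{\max}$ defined as the supremum of existence times subject to the blow-up alternative $\lim_{t\uparrow T_{\max}}\|u(\cdot,t)\|_{L^\infty}=\infty$ when $T_{\max}<\infty$. The main obstacle is that, unlike the classical heat kernel, the Green function $G$ carries the memory of the Riemann--Liouville derivative and does not form a semigroup, so the restart at $T_0$ is not automatic: one must verify that the representation \eqref{2.22} for the time-shifted problem (as in Corollary~\ref{C1}, via $t'=t-\tau$) still closes the Duhamel term against the history accumulated on $[0,T_0]$. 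Once this time-translation compatibility is pinned down, the self-mapping and contraction estimates at each step reduce to routine applications of Lemma~\ref{L3} and the kernel bound $\|G(\cdot,t)\|_{L^1}<1$ of Lemma~\ref{L2.2}, with the caveat that the local Lipschitz constant $L=L(M)$ forces the admissible time step to shrink as $M$ grows.
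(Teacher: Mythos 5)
Your proposal is correct and follows essentially the same route as the paper: the Banach fixed point theorem applied to the Duhamel operator on a ball in $C_0(\mathbb{R}^N,C[0,T])$, with the self-mapping and contraction estimates obtained from Lemma \ref{L3} and the kernel bound of Lemma \ref{L2.2}. Your version is in fact slightly more careful than the paper's, which tacitly normalizes the local Lipschitz constant (and the bound $\|f(\cdot,\cdot,u)\|_{L^\infty}\le\|u\|_{L^\infty}$) to $1$ so that $T\le\tfrac12$ suffices, and which does not discuss the continuation to $T_{\max}$ or the non-semigroup issue you flag.
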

Proof.
For arbitrary $T >0$, we define the Banach space
$$E_T=\{u(x,t)\in C_0(\mathbb{R}^N,C[0, T_{\max})); |||u(x,t)|||\leq2\parallel u_0(x)\parallel_{L^{\infty}(\mathbb{R}^N)}\}.$$
and $|||\cdot|||$ is the norm of $E_T$ defined by
$$|||u(x,t)||| = \parallel u(x,t)\parallel_{L^{\infty}(\mathbb{R}^N,L^{\infty}[0,T])} .$$

Next, for $u(x,t)\in E_T$, we define
$$\Psi(u)=\int_{\mathbb{R}^N}{G\left( x-y,{t}' \right)}u_0(y)dy+\int_{\Omega_T}{{G\left( x-y,t-\tau  \right)}f(y,\tau,u)dyd\tau }.$$
We are going to prove the existence of a unique local solution as a fixed point of $\Psi$ via the Banach fixed point theorem.
\begin{itemize}
  \item $\Psi:E_T\rightarrow E_T.$
\end{itemize}
Let $u(x,t)\in E_T$, using \eqref{2.22}, we obtain
\begin{align*}|||u(x,t)|||&\leq \parallel u_0(x) \parallel_{L^{\infty}(\mathbb{R}^N)}+\left\|\int_{\Omega_T}{{G\left( x-y,t-\tau  \right)}f(y,\tau,u)dyd\tau }\right\|_{L^{\infty}[0,T]}\\&=\parallel u_0(x) \parallel_{L^{\infty}(\mathbb{R}^N)}+\left\|\int_{\Omega_T}{{G\left( x-y,t-\tau  \right)}\left\|f(y,\tau,u)\right\|_{L^{\infty}(\mathbb{R}^N)} dyd\tau }\right\|_{L^{\infty}[0,T]}\\&\leq \parallel u_0(x) \parallel_{L^{\infty}(\mathbb{R}^N)}+T\left\|u(x,t)\right\|_{L^{\infty}(\mathbb{R}^N,L^{\infty}[0,T])}\\&\leq \parallel u_0(x) \parallel_{L^{\infty}(\mathbb{R}^N)}+2T\left\|u_0(x)\right\|_{L^{\infty}(\mathbb{R}^N)}. \end{align*}

 Now, for $T$ such that \, $2T\left\|u_0(x)\right\|_{L^{\infty}(\mathbb{R}^N)}\leq \parallel u_0(x) \parallel_{L^{\infty}(\mathbb{R}^N)},\,\, u(x,t)\in E_T$ and have chosen $T$ so that $T\leq\frac{1}{2}.$

$\bullet$ $\Psi$ is a contraction for $T\leq\frac{1}{2}.$

Let $u(x,t),\tilde{u}(x,t) \in E_T;$ using Lemma \eqref{L2.2}, we get
\begin{align*}|||u(x,t)-\tilde{u}(x,t)|||&\leq\left\|\int_{\Omega_T}{{G\left( x-y,t-\tau  \right)}\left\|f(y,\tau,u)-f(y,\tau,\tilde{u})\right\|_{L^{\infty}(\mathbb{R}^N)} dyd\tau }\right\|_{L^{\infty}[0,T]}\\&\leq T\left\|f(y,\tau,u)-f(y,\tau,\tilde{u})\right\|_{L^{\infty}(\mathbb{R}^N,L^{\infty}[0,T])}
\\&\leq T\left\|u-\tilde{u}\right\|_{L^{\infty}(\mathbb{R}^N,L^{\infty}[0,T])}
\\&\leq \frac{1}{2}|||u(x,t)-\tilde{u}(x,t)|||.
\end{align*}
and have chosen $T$ so that $T\leq\frac{1}{2}.$

Hence, by the Banach fixed point theorem, the problem \eqref{1.1}-\eqref{1.2} admits a mild solution $u(x,t)\in E_T$.

The Banach fixed point theorem then ensures the existence of a mild solution of problem \eqref{1.1}-\eqref{1.2}.
\begin{theorem} Let $u$ be a local solution of problem \eqref{1.1}-\eqref{1.2} for $T<+\infty$. Then we obtain the following estimate
$$\inf_{|x|\rightarrow \infty}[u_0(x)h^{p'/p}(x)]\leq C T^{p'+1}$$
where $C \in \mathbb{R}^+$  .
\end{theorem}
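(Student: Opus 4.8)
The natural approach is the nonlinear capacity (rescaled test-function) method of Mitidieri and Pohozaev, adapted to the weak formulation \eqref{4.8} and to the right-sided operator $\mathcal{D}_{T-,t}^{1-\alpha}$. I take the reaction term in Fujita form, $f(x,t,u)=h(x)u^{p}$ with $p>1$ and $p'=p/(p-1)$, and let $u\ge 0$ be a local weak solution in the sense of Definition \ref{D2.1}, with $h\ge 0$.

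First I would insert a nonnegative separated test function $\varphi(x,t)=\phi(x)\psi(t)$ into \eqref{4.8}, where $\psi\in C^{1}([0,T])$ satisfies $\psi(T)=0$, $\psi(0)=1$, and $\phi\in C_0^2(\mathbb{R}^N)$, $\phi\ge 0$, is a spatial cutoff to be localized at the end. Since $\varphi_t=\phi\,\psi'$ and $\mathcal{D}_{T-,t}^{1-\alpha}\Delta_x\varphi=\Delta_x\phi\,\mathcal{D}_{T-,t}^{1-\alpha}\psi$, the identity \eqref{4.8} becomes
\[
\int_{\mathbb{R}^N}u_0\,\phi\,dx+\int_{\Omega_T}h\,u^{p}\varphi\,dxdt
=-\int_{\Omega_T}u\,\phi\,\psi'\,dxdt-\int_{\Omega_T}u\,\Delta_x\phi\,\mathcal{D}_{T-,t}^{1-\alpha}\psi\,dxdt .
\]
I would then bound each right-hand term by the parametric Young inequality, splitting the integrand as $\big(h^{1/p}u\,\varphi^{1/p}\big)\cdot\big(h^{-1/p}\varphi^{-1/p}\times(\text{weight})\big)$, so that half of $\int_{\Omega_T}h\,u^{p}\varphi$ is produced on the right and can be absorbed on the left. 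Since $u_0\phi\ge 0$ and $h\,u^{p}\varphi\ge 0$, this yields the a priori bound
\[
\int_{\mathbb{R}^N}u_0\,\phi\,dx
\le C\int_{\Omega_T}h^{-p'/p}\,\varphi^{-p'/p}\Big(|\phi\,\psi'|^{p'}+|\Delta_x\phi|^{p'}\,|\mathcal{D}_{T-,t}^{1-\alpha}\psi|^{p'}\Big)\,dxdt .
\]

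To conclude, I pair this with a matching lower bound: writing $u_0\phi=(u_0 h^{p'/p})(\phi\,h^{-p'/p})$ and using $\phi\ge 0$,
\[
\int_{\mathbb{R}^N}u_0\,\phi\,dx\ \ge\ \Big(\inf_{x\in\operatorname{supp}\phi}u_0(x)\,h^{p'/p}(x)\Big)\int_{\mathbb{R}^N}\phi\,h^{-p'/p}\,dx .
\]
If $\phi$ is chosen so that $|\phi\psi'|$ and $|\Delta_x\phi|$ are controlled by $\phi$ on its support, the right-hand integral of the a priori bound factors as the \emph{same} spatial integral $\int_{\mathbb{R}^N}\phi\,h^{-p'/p}\,dx$ times a purely temporal integral of $\psi$ and $\mathcal{D}_{T-,t}^{1-\alpha}\psi$ over $(0,T)$; performing that time integration and collecting the scaling constants gives the factor $T^{p'+1}$. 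Dividing both sides by $\int_{\mathbb{R}^N}\phi\,h^{-p'/p}\,dx$ leaves $\inf_{x\in\operatorname{supp}\phi}u_0 h^{p'/p}\le C\,T^{p'+1}$, and finally pushing $\operatorname{supp}\phi$ off to infinity replaces the support infimum by $\inf_{|x|\to\infty}$, which is the claimed estimate.

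I expect the main obstacle to be the temporal analysis. One must select a time weight (for instance $\psi(t)=(1-t/T)^{\lambda}$ with $\lambda$ large) for which $\varphi^{-p'/p}|\psi'|^{p'}$ and $\varphi^{-p'/p}|\mathcal{D}_{T-,t}^{1-\alpha}\psi|^{p'}$ stay integrable up to $t=T$, and then compute precisely how the \emph{right} Riemann--Liouville/Caputo derivative of that weight rescales under $t\mapsto t/T$ so as to extract the exact power of $T$. A secondary technical point is justifying the limit $\operatorname{supp}\phi\to\infty$: it requires the weight $\phi\,h^{-p'/p}$ to carry divergent mass along a sequence of shells $|x|\sim R$, so that the support infima converge to the limit infimum at infinity.
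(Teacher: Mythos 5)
Your proposal follows essentially the same route as the paper: the paper also uses the Baras--Kersner nonlinear capacity method with the separated test function $\varphi(x,t)=\Phi(x/R)\,(1-t/T)^{l}$, where $\Phi$ is supported in an annulus and satisfies $\Delta_x\Phi\leq k\Phi$, absorbs the term $\int h\,u^{p}\varphi$ by the $\varepsilon$-Young inequality while keeping the initial-data term, bounds $\int u_0\Phi$ below by $\inf_{|y|>1}u_0(Ry)h^{p'-1}(Ry)\int\Phi h^{1-p'}$, and lets $R\to\infty$ after the rescaling $t=T\tau$, $x=Ry$. The two technical points you flag (the explicit formula $\mathcal{D}_{T-,t}^{1-\alpha}(1-t/T)^{l}=\Theta\,T^{1-\alpha}(1-t/T)^{l+\alpha-1}$ with the constraint $l>\alpha p'-1$, and the extraction of the powers of $T$ and $R$) are exactly the computations the paper carries out, so your sketch is a faithful blueprint of the published argument.
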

Proof. We follow the idea of Baras and Kersners \cite{Baras}. Let us consider the following test function:
\begin{equation*}
\varphi(x,t)=\Phi\biggl(\frac{x}{R}\biggr)
 \begin{cases}
   \biggl(1-\frac{t}{T}\biggr)^l, \,\,\,\ 0<t\leq T\\
   0, \,\,\,\ t>T
 \end{cases}
\end{equation*}
where $\Phi\in W^{1,\infty}(R^N)$ is nonnegative with $\textrm{supp}\,\Phi\subset\{{1<|x|<2}\}$ and satisfies
$\Delta_x\Phi(x)\leq k\Phi(x)$ for some $k>0$.

The exponent $l$ is any positive real number if $p>1/(1-\alpha)$ and $l>\alpha p'-1$ if $p<1/(1-\alpha)$. We have
$$\mathcal{D}_{T-,t}^{1-\alpha }\Delta_x\varphi(x,t)=\mathcal{D}_{T-,t}^{1-\alpha }\Delta_x\biggl(1-\frac{t}{T}\biggr)^l=\Theta \Delta_x\Phi(x) R^{-2}{T}^{1-\alpha}\biggl(1-\frac{t}{T}\biggr)^{l+\alpha-1},$$
where $\Theta=\frac{\Gamma(l)}{\Gamma(l+\alpha)}$.

Using the formulation of Definition \eqref{D2.1} and a similar argument to the one which lead us to \eqref{2.30} but
keeping the first term in the left-hand side of \eqref{D2.1}, we have

\begin{multline}\label{4.23}-\int\limits_{\mathbb{R}^N}{u_0( x)\varphi (x,0)dx}\leq\\ C(\varepsilon) \int\limits_{{\Omega}_{T}}{\left\{|\mathcal{D}_{T-,t}^{1-\alpha }{\Delta_x{\varphi }(x,t)}|_+^{p'}+{|\varphi_{t}(x,t)\arrowvert}_+^{p'}\right\}(h{\varphi })^{1-p'}(x,t)}dxdt\end{multline}
where $C \in \mathbb{R}^+$ . In view of the hypotheses on $l$ and the point is that

$$\mathcal{D}_{T-,t}^{1-\alpha }\Delta_x\varphi(x,t)=\Lambda \Delta_x\Phi(x) R^{-2}{T}^{1-\alpha}\biggl(1-\frac{t}{T}\biggr)^{l+\alpha-1}.$$

Let us perform the change of variables  $t=T\tau$ and $x=Ry$ in \eqref{4.23}, we have
\begin{equation}\label{4.24}\int\limits_{\mathbb{R^N}}{u_0(Ry)\Phi(y)}\leq C R^{-2p'}T^{(1-\alpha)p'+1}\int\limits_{{\Omega}_{T}}\Phi(y){h^{1-p'}(Ry)}+CT^{p'+1}\int\limits_{{{\Omega}_{T}}}\Phi(y){h^{1-p'}(Ry)}.\end{equation}

Using the estimate

$$\inf_{|y|>1}u_0(Ry){h^{p'-1}(Ry)}\int\limits_{\mathbb{R}^N}{\Phi(y){h(Ry)}^{1-p'}}\leq\int\limits_{\mathbb{R}^N}{u_0(Ry)\Phi(y)}$$
and dividing by the term $\int\limits_{\mathbb{R}^N}{u_0(Ry)\Phi(y)}$, we can write
$$\inf_{|y|>1}u_0(Ry){h^{p'-1}(Ry)}\leq C (R^{-2p'}T^{(1-\alpha)p'+1}+T^{p'+1}).$$
Let $R\rightarrow +\infty$, then we have
$$\inf_{|x|\rightarrow \infty}[u_0(x)h^{p'-1}(x)]\leq C T^{p'+1}.$$

\subsection{Critical exponents of Fujita type for the integro-differential diffusion equation}
In this section we study the following integro-differential diffusion equation
\begin{equation}\label{F.1}{{u}_{t}}(x,t)=\Delta_x {D}_{0+,t}^{1-\alpha }u(x,t)+t^\sigma|x|^\rho u^p(x,t),\text{ }(x,t)\in \mathbb{R^N} \times \left( 0,T \right)=\Omega_T,\end{equation}
with the initial condition
\begin{equation}\label{F.2}u\left( x,0 \right)={{u}_{0}}\left( x \right)\ge 0,\end{equation}
where $\sigma,\rho \in \mathbb{R}$ and $\alpha \in (0,1)$.
\begin{definition}\label{WS} A function $u\in L_{loc}^{1}(\Omega_\infty)$, $(\Omega_\infty:=(x,t)\in \mathbb{R}^N \times \left( 0,\infty \right))$  is a global weak solution to integro-differential diffusion equation on $\Omega_\infty$ if
\begin{multline}\label{2.25}-\int\limits_{\mathbb{R}^N}{u_0( x)\varphi (x,0)dx}-\int\limits_{\Omega_\infty}{|x|^\rho t^\sigma u^{p}( x,t ){\varphi} ( x,t)dxdt}=\\ \int\limits_{\Omega_\infty}{u\left( x,t \right){\mathcal{D}}_{T-,t}^{1-\alpha }{\Delta_x{\varphi}}(x,t)dxdt} +\int\limits_{\Omega_\infty}{u\left( x,t \right){{{\varphi} }_{t}}\left( x,t \right)dx dt,}\end{multline}
where $\rho \geq 0, \sigma>-1.$
\end{definition}

\begin{theorem}\label{T1} Let $p>1$. If
$$1<p\leq p_c=1+\frac{2(\sigma+1)+\rho\alpha}{N\alpha },$$
then problem \eqref{F.1}-\eqref{F.2} admits no global weak nonnegative solutions other than the trivial one.
\end{theorem}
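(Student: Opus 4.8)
The plan is to argue by contradiction through the rescaled test-function (nonlinear capacity) method of Baras--Kersner and Mitidieri--Pohozaev, adapted to the operator $\partial_t-\Delta_x D_{0+,t}^{1-\alpha}$. Assume $u\ge0$ is a nontrivial global weak solution in the sense of Definition \ref{WS}. Starting from the weak identity \eqref{2.25}, and using that $u_0\ge0$ while the admissible test functions are nonnegative, the initial term is favorably signed and may be discarded, which leaves
$$\int_{\Omega_\infty}|x|^\rho t^\sigma u^p\,\varphi\,dxdt\le\int_{\Omega_\infty}u\,\bigl|\mathcal{D}_{T-,t}^{1-\alpha}\Delta_x\varphi\bigr|\,dxdt+\int_{\Omega_\infty}u\,|\varphi_t|\,dxdt.$$

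First I would fix the separated test function $\varphi(x,t)=\Phi(x/R)\,(1-t/T)_+^{\,l}$, with $\Phi$ and the exponent $l$ chosen exactly as in the preceding theorem, so that the fractional-derivative identity recorded just before \eqref{4.24}, namely $\mathcal{D}_{T-,t}^{1-\alpha}\Delta_x\varphi=\Lambda\,R^{-2}T^{\alpha-1}(\Delta_x\Phi)(x/R)\,(1-t/T)_+^{\,l+\alpha-1}$, is available and all the constant integrals below converge. The structural heart of the argument is the \emph{matched scaling} of the operator: since $\Delta_x$ carries the weight $R^{-2}$ while $\mathcal{D}_{T-,t}^{1-\alpha}$ carries the weight $T^{\alpha-1}$ — a fact already visible in the self-similarity $|x|^2/t^\alpha$ of the Green kernel $E_{\alpha,1}(-\xi^2t^\alpha)$ — the two terms $\partial_t$ and $\Delta_xD_{0+,t}^{1-\alpha}$ balance precisely when $R^2=T^\alpha$, and I would tie the two scales by this relation throughout.

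Next I would estimate the two right-hand terms by the $\varepsilon$-Young inequality: writing $\Psi$ for the operator term $\mathcal{D}_{T-,t}^{1-\alpha}\Delta_x\varphi$ and factoring $u\Psi=\bigl((|x|^\rho t^\sigma\varphi)^{1/p}u\bigr)\bigl((|x|^\rho t^\sigma\varphi)^{-1/p}\Psi\bigr)$ (and analogously for $\varphi_t$), so as to reabsorb half of the left-hand side, this produces
$$\int_{\Omega_\infty}|x|^\rho t^\sigma u^p\varphi\,dxdt\le C\int_{\Omega_\infty}(|x|^\rho t^\sigma)^{1-p'}\varphi^{1-p'}\Bigl(\bigl|\mathcal{D}_{T-,t}^{1-\alpha}\Delta_x\varphi\bigr|^{p'}+|\varphi_t|^{p'}\Bigr)\,dxdt.$$
Performing the change of variables $x=Ry$, $t=T\tau$ and invoking $R^2=T^\alpha$ to convert every power of $R$ into a power of $T$, both error integrals reduce to $C\,T^{\mathcal E}$ times fixed finite integrals in $(y,\tau)$, where a direct count of exponents gives $(p-1)\mathcal E=\tfrac{\alpha N}{2}(p-1)-\tfrac{\alpha\rho}{2}-\sigma-1$. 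Hence $\mathcal E<0$ exactly when $p<p_c$ and $\mathcal E=0$ exactly when $p=p_c$, which is precisely the arithmetic origin of the stated critical exponent.

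In the subcritical range $1<p<p_c$ we have $\mathcal E<0$, so letting $T\to\infty$ (whence $R=T^{\alpha/2}\to\infty$ and $\varphi\to1$) sends the right-hand side to zero and forces $\int_{\Omega_\infty}|x|^\rho t^\sigma u^p=0$ by monotone convergence, i.e. $u\equiv0$, a contradiction. The step I expect to be the main obstacle is the critical case $p=p_c$, where $\mathcal E=0$ yields only the finiteness $\int_{\Omega_\infty}|x|^\rho t^\sigma u^p<\infty$. Here I would not waste information through Young's inequality but instead estimate the two error terms by Hölder's inequality confined to the supports of $\Delta_x\Phi(\cdot/R)$ and of $\partial_t(1-t/T)_+^{\,l}$; each is then controlled by $\bigl(\int_{\mathcal D_{R,T}}|x|^\rho t^\sigma u^p\bigr)^{1/p}$ times an $\mathcal O(1)$ factor. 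The delicate point is to arrange these derivative-supports so that they genuinely escape to infinity as $R,T\to\infty$ — the spatial annulus moving out in $x$, and the time layer moving out in $t$ — so that absolute continuity of the now-finite integral drives both factors to zero; combined with $\varphi\to1$ on the left, this again returns $\int_{\Omega_\infty}|x|^\rho t^\sigma u^p=0$ and hence $u\equiv0$. The two places demanding the most care are the justification of the fractional-derivative identity for the truncated profile (the admissible range of $l$) and this escape-of-supports mechanism in the critical limit.
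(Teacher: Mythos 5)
Your overall strategy coincides with the paper's: argue by contradiction, drop the favorably signed initial term, absorb the nonlinearity via the $\varepsilon$-Young inequality with the weight $(|x|^\rho t^\sigma\varphi)^{\pm 1/p}$, rescale, and read off the critical exponent from the sign of the resulting power of the scaling parameter. Your exponent count is correct: with the scales tied by $R^2=T^\alpha$ your $\mathcal E$ equals $\tfrac{\alpha}{2}\lambda$, where $\lambda$ is the exponent the paper obtains, and both vanish exactly at $p=p_c$. The one structural difference is the test function: you take the separated profile $\Phi(x/R)\,(1-t/T)_+^{\,l}$ (borrowed from the paper's local-estimate theorem, where the right Caputo derivative of the power factor is explicit), whereas the paper uses the single parabolic cutoff $\varphi(x,t)=\Phi\bigl((|x|^2+t^\theta)/R^2\bigr)$ and then \emph{derives} $\theta=\alpha$ by demanding that the two error terms scale identically. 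For the subcritical range $1<p<p_c$ either choice works and your argument is complete.

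The genuine gap is in the critical case $p=p_c$, and you have correctly located it but not closed it: with the factor $(1-t/T)_+^{\,l}$, the time derivative $\partial_t(1-t/T)_+^{\,l}=-\tfrac{l}{T}(1-t/T)_+^{\,l-1}$ is supported on \emph{all} of $[0,T]$, not on a layer escaping to infinity. Hence the Hölder factor $\bigl(\int_{\mathrm{supp}\,\varphi_t}|x|^\rho t^\sigma u^p\varphi\bigr)^{1/p}$ converges, as $T\to\infty$, to the full integral $\bigl(\int_{\Omega_\infty}|x|^\rho t^\sigma u^p\bigr)^{1/p}$, which the $\mathcal E=0$ bound shows is finite but gives no reason to be zero; so this error term does not vanish and the contradiction is lost. (The spatial term is fine, since $\Delta_x\Phi(x/R)$ factors out of the time-nonlocal derivative and keeps that term confined to the escaping annulus $\{R\le|x|\le 2R\}$.) The paper avoids this precisely by using the combined cutoff, for which both $\varphi_t$ and $\Delta_x\varphi$ live on the parabolic annulus $\{R^2<|x|^2+t^\alpha\le 2R^2\}$, which does escape. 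To repair your route you must replace $(1-t/T)_+^{\,l}$ by a time cutoff equal to $1$ on $[0,T]$ and vanishing for $t\ge 2T$, so that its derivative is supported in the escaping layer $[T,2T]$; but then the identity $\mathcal{D}_{T-,t}^{1-\alpha}(1-t/T)^l=\Theta\,T^{\alpha-1}(1-t/T)^{l+\alpha-1}$ is no longer available and the $T^{\alpha-1}$ scaling of the right fractional derivative of the new profile has to be established separately. As written, the critical case is an announced plan rather than a proof.
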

\textbf{Proof.}
We prove from the contradiction. Assume that $u$ is a nontrivial nonnegative solution which exists globally in time. That is $u$ exists in $(0,T^*)$ for $T^*>0$.
Let $T, R, \theta \in \mathbb{R}^+$, such that $0<T{R}^{2/\theta}<{T}^{*}.$

Suppose $\Phi(z)$ be a smooth nonincreasing function
\begin{equation*}
\Phi(z)=
 \begin{cases}
   1, &\text{if $z\leq1 $}\\
   0, &\text{if $z\geq 2$}
 \end{cases}
\end{equation*}
and $0\leq \Phi(z)\leq1$.

The test function $\varphi(x,t)$ is chosen so that
\begin{multline}\label{2.29}\int\limits_{\Omega_{T{{R}^{2/\theta }}}}|\mathcal{D}_{\small T{{R}^{2/\theta }-,t}}^{1-\alpha }{\Delta_x{\varphi }(x,t)}|^{p'}({h\varphi})^{-p'/p}(x,t)dxdt<\infty, \\ \int\limits_{\Omega_{T{{R}^{2/\theta }}}}{{{\left| {{\varphi }_{t}(x,t)} \right|}^{p'}}}{({h\varphi})^{-p'/p}(x,t)}dxdt<\infty.\end{multline}

To estimate the right-hand side of the Definition \eqref{WS} on ${{\Omega }_{T{{R}^{2/\theta }}}}$, we write

\begin{align*}\int\limits_{{{\Omega }_{T{{R}^{2/\theta }}}}}&{u\left( x,t \right) \mathcal{D}_{T{{R}^{2/\theta }-,t}}^{1-\alpha }{\Delta_x{\varphi }}\left( x,t \right)dxdt}\\&= \int\limits_{{{\Omega }_{T{{R}^{2/\theta }}}}}{u\left( x,t \right)({h{\varphi })^{1/p}}\left( x,t \right)\left[ \mathcal{D}_{T{{R}^{2/\theta }-,t}}^{1-\alpha }{\Delta_x{\varphi }}\left( x,t \right) \right]{(h{\varphi })^{-1/p}}\left( x,t \right)dxdt}.\end{align*}

According to the $\varepsilon$-Young inequality
$$XY\leq \varepsilon X^p+C(\varepsilon)Y^{p'},\,\, \frac{1}{p}+\frac{1}{p'}=1,\,\, X\geq0,Y\geq0 $$
for the right-side of \eqref{2.25} on $\Omega_{T{R}^{2/\theta}}$, we have
\begin{multline*}\int\limits_{{\Omega}_{T{{R}^{2/\theta }}}}u(x,t)\mathcal{D}_{\small T{{R}^{2/\theta }}-,t}^{1-\alpha }{\Delta_x{\varphi }(x,t)}dxdt \\\leq \varepsilon\int\limits_{{\Omega}_{T{{R}^{2/\theta }}}}{\mid u\mid}^{p}(x,t)(h\varphi)(x,t) dxdt+C(\varepsilon) \int\limits_{{\Omega}_{T{{R}^{2/\theta }}}}{|\mathcal{D}_{T{{R}^{2/\theta }}-,t}^{1-\alpha }{\Delta_x{\varphi }(x,t)}|}^{p'}({h\varphi })^{-p'/p}(x,t)dxdt.\end{multline*}

Similarly,
\begin{multline*}\int\limits_{{{\Omega}_{T{{R}^{2/\theta }}}}}u(x,t){\varphi }_{t}(x,t) dxdt\\ \le \varepsilon\int\limits_{{\Omega}_{T{{R}^{2/\theta }}}}{\mid u(x,t)\mid}^{p}(h\varphi)(x,t) dxdt+C(\varepsilon) \int\limits_{{\Omega}_{T{{R}^{2/\theta }}}}{|\varphi_{t}(x,t)|}^{p'}(h{\varphi })^{-p'/p}(x,t)dxdt.\end{multline*}

Now, taking $\varepsilon$ small enough, we obtain the estimate
\begin{multline}\label{2.30}\int\limits_{{\Omega}_{T{{R}^{2/\theta }}}}h(x,t){u^{p}(x,t)}\varphi(x,t) dxdt \\ \leq C(\varepsilon) \int\limits_{{\Omega}_{T{{R}^{2/\theta }}}}{\left\{|\mathcal{D}_{T{{R}^{2/\theta }}-,t}^{1-\alpha }{\Delta_x{\varphi }(x,t)}|^{p'}+{|\varphi_{t}(x,t)\arrowvert}^{p'}\right\}(h{\varphi })^{-p'/p}(x,t)}dxdt.\end{multline}

We set the function $\varphi$ such that     $$\varphi \left( x,t \right)=\Phi \left( \frac{{{|x|}^{2}}+{{t}^{\theta }}}{{{R}^{2}}} \right),$$ where $R,\theta \in {\mathbb{{R}}^{+}}.$

Let us perform the change of variables  $t=\tau {{R}^{2/\theta }},$ $x=yR$ and set
$$\Omega :=\left\{ \left. \left( y,\tau  \right)\in \mathbb{R^N}\times (0,T/{R}^{2/\theta}),\text{ }{{y}^{2}}+{{\tau }^{\theta }}<2 \right\} \right.,\mu \left( y,\tau  \right)={{y}^{2}}+{{\tau }^{\theta }}.$$

Then, we choose $\theta$ such that the right-hand side of \eqref{2.30}
\begin{multline*} \int\limits_{{\Omega}_{T{{R}^{2/\theta }}}}|\mathcal{D}_{T{{R}^{2/\theta }}-,t}^{1-\alpha }{\Delta_x{\varphi }(x,t)}|^{p'}(h{\varphi })^{-p'/p}(x,t)dxdt\\=
\int\limits_{{\Omega}_{T{{R}^{2/\theta }}}}{\left[ -\frac{1}{\Gamma \left( \alpha  \right)}\int\limits_{\tau {{R}^{2/\theta }}}^{T{{R}^{2/\theta }}}{{(s-t)^{\alpha -1 }}}{\Delta_x{\varphi }_{s}}\left( x,s \right)ds \right]}^{p'}{(h{\varphi })^{-p'/p}}(x,t)dxdt\\ \leq R^{\frac{2}{\theta}(\alpha-1)p'-2p'-(\frac{2}{\theta}\sigma+\rho)\frac{p'}{p}+\frac{2}{\theta}+N}\int\limits_{\Omega}|\mathcal{D}_{T-,\tau}^{1-\alpha }({\Delta_y{\Phi }_{\tau}}\circ\mu)|^{p'}h^{-p'/p}(y,\tau)(\Phi \circ\mu)^{-p'/p}dyd\tau\end{multline*}
and
\begin{multline*}\int\limits_{{\Omega}_{T{{R}^{2/\theta }}}}{{{\left| {{\varphi }_{t}}(x,t) \right|}^{p'}}}{(h{\varphi })^{-p'/p}}(x,t)dxdt\\ \le{{R}^{-\frac{2}{\theta}p'-(\frac{2}{\theta}\sigma+\rho)\frac{p'}{p}+\frac{2}{\theta}+N}}\int\limits_{\Omega}{{{\left| ({{\Phi }_{\tau}}\circ \mu )\right|}^{p'}}}h^{-p'/p}(y,\tau){{\left( \Phi \circ \mu  \right)}^{-p'/p}}dyd\tau $$
\end{multline*}
are of the same order in $R$. Herewith we get that $\theta=\alpha$.

Then we have estimate
\begin{equation}\label{2.31}\int\limits_{{\Omega}_{T{{R}^{2/{\alpha}}}}}h(x,t)u^{p}(x,t)\varphi(x,t) dxdt\leq C R^\lambda,\end{equation}
where
$$\lambda={\frac{2}{\alpha}(\alpha-1) p'-2p'-\Biggl(\frac{2}{\alpha}\sigma+\rho\Biggr)\frac{p'}{p}+\frac{2}{\alpha}+N}$$
and
$$C=C(\varepsilon){\int\limits_{\Omega }\Biggl({|\mathcal{D}_{T{R}^{2/\alpha}-,\tau}^{1-\alpha }}({\Delta_y{\Phi }_{\tau}}\circ\mu)|^{p'}}+{{{| ({{\Phi }_{\tau}}\circ \mu )|}^{p'}}}\Biggr)h^{-p'/p}(y,\tau){{\left( \Phi \circ \mu  \right)}^{-p'/p}}dyd\tau.$$

In case $\lambda<0$ (i.e. is $p<p_{c}$) and $R\rightarrow\infty$ in \eqref{2.31}, we have
\begin{equation}\label{2.32}\int\limits_{\Omega }h(x,t)u^{p}(x,t)dxdt\leq0.\end{equation}

Then it follows that $u=0$, which is a contradiction.

If $\lambda=0$, (i.e. is $p=p_{c}$) note that the convergence of the integral in \eqref{2.31} if
$$\Omega_{R}=\{(x,t)\in \mathbb{R}^N\times (0,T): R^2<x^2+t^{\alpha}\leq 2R^2\},$$
then
\begin{equation}\label{2.33}
\lim_{R\rightarrow\infty }\int\limits_{{\Omega}_{R}}h(x,t)u^{p}(x,t)\varphi(x,t)dxdt=0.
\end{equation}

Using the H\"{o}lder inequality we have
\begin{equation}\label{2.34}
\int\limits_{{\Omega}_{T{{R}^{2/\alpha}}}}h(x,t)u^{p}(x,t)\varphi(x,t) dxdt\leq L\biggl(\int\limits_{{\Omega}_{R}}{h(x,t)\arrowvert u(x,t)\mid}^{p}\varphi(x,t)dxdt\biggr)^{1/p},\end{equation}
where
\begin{multline*}L:=\biggl(\int\limits_{{\Omega}_{1}}|\mathcal{D}_{T-,\tau}^{1-\alpha }(\Delta_y{{\Phi }_{\tau }}\circ\mu)|^{p'}h^{-p'/p}(y,\tau)(\Phi \circ\mu)^{-p'/p}dyd\tau\biggr)^{p'}\\+\biggl(\int\limits_{{\Omega}_{1}}{{{\left| ({{\Phi }_{\tau}}\circ \mu )\right|}^{p'}}}h^{-p'/p}(y,\tau){{\left( \Phi \circ \mu  \right)}^{-p'/p}} dyd\tau\biggr)^{1/p'}$$
\end{multline*}
and
$$\Omega_{1}=\{(y,\tau)\in \mathbb{R^N}\times (0,T/R^{2/\alpha}): 1<y^2+\tau^{\alpha}\leq 2\}.$$
Using \eqref{2.34}, we obtain via \eqref{2.33}, after passing to the limit as $R\rightarrow\infty$,
$$\int\limits_{\Omega}h(x,t)u^{p}(x,t) dxdt=0.$$
Then it follows that $u=0$ and completes the proof.

\section{Nonlinear integro-differential diffusion system}
In this section, we show how the method of proof can be used to the integro-differential diffusion system in ${\Omega}_{T}$$({\Omega}_{T}= \mathbb{R^{N}}\times(0,T))$ :
\begin{equation}\label{3.1}\left\{\begin{array}{l}
{u}_{t}( x,t )-{\Delta_x{D_{0+,t}^{1-\alpha }{u}( x,t )}}=h_1(x,t)v^{p}( x,t ) \,\,\,\text{in}\,\,\ {\Omega}_{T},\\{}\\
{v}_{t}( x,t )-{\Delta_x{D_{0+,t}^{1-\beta }{v}( x,t )}}=h_2(x,t)u^{q}( x,t )  \,\,\,\text{in}\,\,\ {\Omega}_{T},\end{array}\right.
\end{equation}
with Cauchy data
\begin{equation}\label{3.2}u\left( x,0 \right)={{u}_{0}}\left( x \right)\ge ~0,\text{   }v\left( x,0 \right)={{v}_{0}}\left( x \right)~\ge 0,\text{ for  }x\in \mathbb{R^{N}},\end{equation}
where $0<\alpha ,\beta <1, h_1(x,t)=|x|^{\rho_1} t^{\sigma_1}, h_2(x,t)=|x|^{\rho_2} t^{\sigma_2}$ and $\sigma_1,\sigma_2>-1$, $\rho_1,\rho_2\geq0.$

\subsection{Local mild solution of integro-differential diffusion system}
In this section we  study the existence of local solutions of \eqref{3.1}-\eqref{3.2}.
\begin{definition} Let $u_0(x), v_0(x)\in C_0(\mathbb{R}^N)$ and $T >0$.

We call $(u,v)\in C_0(\mathbb{R}^N,C[0, T_{\max})\times C_0(\mathbb{R}^N,C[0, T_{\max})$ a mild solution of the problem \eqref{3.1}-\eqref{3.2},  if $(u, v)$ satisfies the following integral equations

\begin{equation}\label{3.3}\left\{\begin{array}{l}u(x,t)=\int_{\mathbb{R}^N}{G\left( x-y,t \right)}{u_0} \left( y \right)dy+\int_{0}^{T}{\int_{\mathbb{R}^N}{G\left( x-y,t-\tau  \right)}f(y,\tau,v)dyd\tau }.
\\{}\\v(x,t)=\int_{\mathbb{R}^N}{G\left( x-y,t \right)}{v_0} \left( y \right)dy+\int_{0}^{T}{\int_{\mathbb{R}^N}{G\left( x-y,t-\tau  \right)}g(y,\tau,u)dyd\tau}.\end{array}\right. \end{equation}
\end{definition}
where $f(y,\tau,v)=|x|^{\rho_1} t^{\sigma_1}v^{p}( x,t )$ and $g(y,\tau,u)= |x|^{\rho_2} t^{\sigma_2}u^{q}( x,t ).$

\begin{theorem} (Local existence).
Let $u_0(x), v_0(x)\in C_0(\mathbb{R}^N)$.

Then, there exists a maximal time $T_{\max} > 0$ such that the problem
\eqref{3.1}-\eqref{3.2} has a unique mild solution $(u, v)\in C_0(\mathbb{R}^N,C[0, T_{\max}))\times C_0(\mathbb{R}^N,C[0, T_{\max}))$. \end{theorem}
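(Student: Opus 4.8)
The plan is to mimic the single-equation local existence argument (Theorem~\ref{T4.2}) in the product Banach space, using the Banach fixed point theorem on the pair $(u,v)$. First I would fix an arbitrary $T>0$ and introduce the complete metric space
$$E_T=\Bigl\{(u,v)\in C_0(\mathbb{R}^N,C[0,T])\times C_0(\mathbb{R}^N,C[0,T]):\ |||(u,v)|||\le 2\bigl(\|u_0\|_{L^\infty}+\|v_0\|_{L^\infty}\bigr)\Bigr\},$$
equipped with the norm $|||(u,v)|||=\|u\|_{L^\infty(\mathbb{R}^N,L^\infty[0,T])}+\|v\|_{L^\infty(\mathbb{R}^N,L^\infty[0,T])}$. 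On $E_T$ I would define the solution map $\Phi(u,v)=(\Phi_1(u,v),\Phi_2(u,v))$ by the right-hand sides of the integral system \eqref{3.3}, so that a fixed point of $\Phi$ is exactly a mild solution. The estimate $\int_{\mathbb{R}^N}G(x,t)\,dx<1$ of Lemma~\ref{L2.2}, together with the Hausdorff-Young inequality of Lemma~\ref{L3}, is what controls the Green-function convolutions throughout.

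Next I would verify the two hypotheses of the Banach fixed point theorem. For the self-mapping property $\Phi:E_T\to E_T$, I would bound each component: applying Lemma~\ref{L2.2} to the linear (Green-function) part reproduces the initial data norm, and the nonlinear integral term contributes a factor proportional to $T$ times a power of the current norm, so that
$$|||\Phi(u,v)|||\le \|u_0\|_{L^\infty}+\|v_0\|_{L^\infty}+C\,T\bigl(|||(u,v)|||^{p}+|||(u,v)|||^{q}\bigr).$$
Choosing $T$ small enough forces the right-hand side below $2(\|u_0\|_{L^\infty}+\|v_0\|_{L^\infty})$, so $\Phi$ maps $E_T$ into itself. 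For the contraction property I would estimate $|||\Phi(u,v)-\Phi(\tilde u,\tilde v)|||$; the linear parts cancel because the initial data are identical, leaving only the differences of the nonlinear terms $v^p-\tilde v^{\,p}$ and $u^q-\tilde u^{\,q}$. Here I would use that on the bounded ball $E_T$ the maps $s\mapsto s^p$ and $s\mapsto s^q$ are Lipschitz (local Lipschitz continuity of the nonlinearities, exactly as in Theorem~\ref{T4.2}), giving
$$|||\Phi(u,v)-\Phi(\tilde u,\tilde v)|||\le C\,T\,|||(u,v)-(\tilde u,\tilde v)|||,$$
and then shrinking $T$ so that $CT\le\tfrac12$ makes $\Phi$ a contraction.

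Having both properties, the Banach fixed point theorem yields a unique fixed point $(u,v)\in E_T$, which is the desired unique mild solution on $[0,T]$. Finally I would obtain the maximal existence time by a standard continuation argument: define $T_{\max}$ as the supremum of all $T$ for which a mild solution exists, and note that the solution extends to $[0,T_{\max})$ by restarting the fixed-point scheme from later times. The main obstacle I anticipate is the coupling of the two equations together with the spatial-temporal weights $h_1,h_2=|x|^{\rho_i}t^{\sigma_i}$: unlike the clean nonlinearity of Theorem~\ref{T4.2}, the weights $|x|^{\rho_i}$ are unbounded in $x$, so the claimed bound $\|f(\cdot,\tau,v)\|_{L^\infty(\mathbb{R}^N)}\le C\|v\|^p$ is not literally valid on all of $\mathbb{R}^N$; making the $L^\infty$ estimates rigorous will require either localizing the weight, working in a weighted space, or restricting to the regime where the spatial weight can be absorbed, and this is the delicate point that the skeleton argument glosses over.
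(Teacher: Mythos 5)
Your proposal follows essentially the same route as the paper: the same product space $E_T$ with the sum norm, the same map $\Psi=(\Psi_1,\Psi_2)$ built from the integral system \eqref{3.3}, the same use of Lemma \ref{L2.2} to control the Green-function convolutions, and the Banach fixed point theorem with $T$ small enough to get both invariance and the contraction constant $\tfrac12$. Your version is in fact slightly more careful than the paper's (you make the local Lipschitz constant of $s\mapsto s^p$, $s\mapsto s^q$ explicit and add the continuation argument for $T_{\max}$), and the difficulty you flag with the unbounded weights $|x|^{\rho_i}$ is a genuine issue that the paper's own proof also glosses over rather than resolves.
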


Proof.
For arbitrary $T >0$, we define the Banach space
\begin{multline}E_T=\{(u,v)\in C_0(\mathbb{R}^N,C[0, T_{\max}))\times C_0(\mathbb{R}^N,C[0, T_{\max})); \\|||(u,v)|||\leq2(\parallel u_0(x)\parallel_{L^{\infty}(\mathbb{R}^N)}+\parallel v_0(x)\parallel_{L^{\infty}(\mathbb{R}^N)})\}.\end{multline}
where $|||\cdot|||$ is the norm of $E_T$, which represented by
$$|||(u, v)||| = \parallel u\parallel_1 + \parallel v\parallel_1 = \parallel u\parallel_{L^{\infty}(\mathbb{R}^N,L^{\infty}[0,T])} + \parallel v\parallel_{L^{\infty}(\mathbb{R}^N,L^{\infty}[0,T])}.$$

Next, for every $(u, v) \in E_T$, we introduce the map $\Psi$ defined on $E_T$ by
$$\Psi(u, v) = (\Psi_1 (u, v) ,\Psi_2(u, v)),$$
where
$$\Psi_1 (u, v)=\int_{\mathbb{R}^N}{G\left( x-y,t \right)}{u_0} \left( y \right)dy+\int_{0}^{T}{\int_{\mathbb{R}^N}{G\left( x-y,t-\tau  \right)}f(y,\tau,v)dyd\tau }$$
and
$$\Psi_2(u, v)=\int_{\mathbb{R}^N}{G\left( x-y,t \right)}{v_0} \left( y \right)dy+\int_{0}^{T}{\int_{\mathbb{R}^N}{G\left( x-y,t-\tau  \right)}g(y,\tau,u)dyd\tau}.$$

We are going to prove the existence of a unique local solution as a fixed point of $\Psi$ via the Banach fixed point theorem.

\begin{itemize}
  \item $\Psi:E_T\rightarrow E_T.$
\end{itemize}
Let $(u,v)\in E_T;$ using Lemma \eqref{L2.2}, we obtain
\begin{align*}\Psi(u, v)&=\parallel \Psi_1 (u, v)\parallel_{L^{\infty}(\mathbb{R}^N,L^{\infty}[0,T])}
 + \parallel \Psi_2(u, v)\parallel_{L^{\infty}(\mathbb{R}^N,L^{\infty}[0,T])}\\&\leq \parallel u_0(x) \parallel_{L^{\infty}(\mathbb{R}^N)}+\left\|\int_{0}^{T}{\int_{\mathbb{R}^N}{G\left( x-y,t-\tau  \right)}f(y,\tau,v)dyd\tau }\right\|_{L^{\infty}[0,T]}\\&+\parallel v_0(x) \parallel_{L^{\infty}(\mathbb{R}^N)}+\left\|\int_{0}^{T}{\int_{\mathbb{R}^N}{G\left( x-y,t-\tau  \right)}g(y,\tau,u)dyd\tau }\right\|_{L^{\infty}[0,T]}\\&=\parallel u_0(x) \parallel_{L^{\infty}(\mathbb{R}^N)}+\left\|\int_{0}^{T}{\int_{\mathbb{R}^N}{G\left( x-y,t-\tau  \right)}\left\|f(y,\tau,v)\right\|_{L^{\infty}(\mathbb{R}^N)} dyd\tau }\right\|_{L^{\infty}[0,T]}\\&+\parallel v_0(x) \parallel_{L^{\infty}(\mathbb{R}^N)}+\left\|\int_{0}^{T}{\int_{\mathbb{R}^N}{G\left( x-y,t-\tau  \right)}\left\|g(y,\tau,u)\right\|_{L^{\infty}(\mathbb{R}^N)} dyd\tau }\right\|_{L^{\infty}[0,T]}\\&\leq \parallel u_0(x) \parallel_{L^{\infty}(\mathbb{R}^N)}+\parallel v_0(x) \parallel_{L^{\infty}(\mathbb{R}^N)}+T\left\|v\right\|_1+T\left\|u\right\|_1\\&\leq \parallel u_0(x) \parallel_{L^{\infty}(\mathbb{R}^N)}+\parallel v_0(x) \parallel_{L^{\infty}(\mathbb{R}^N)}+2T(\left\|v_0\right\|_{L^{\infty}(\mathbb{R}^N)}+\left\|u_0\right\|_{L^{\infty}(\mathbb{R}^N)}). \end{align*}
 Now, for $T$ such that \,
 $$2T(\left\|v_0(x)\right\|_{L^{\infty}(\mathbb{R}^N)}+\left\|u_0(x)\right\|_{L^{\infty}(\mathbb{R}^N)})\leq \parallel u_0(x) \parallel_{L^{\infty}(\mathbb{R}^N)}+\parallel v_0(x) \parallel_{L^{\infty}(\mathbb{R}^N)},\,\, \Psi(u, v)\in E_T.$$
 \begin{itemize}
  \item $\Psi$ is a contraction.
\end{itemize}
Let $(u,v),\,(\tilde{u},\tilde{v}) \in E_T;$ using Lemma \eqref{L2.2}, we have
\begin{align*}|||\Psi(u, v)&-\Psi(\tilde{u},\tilde{v})|||= \parallel\Psi_1(u, v)-\Psi_1(\tilde{u},\tilde{v})\parallel_1+ \parallel \Psi_2(u, v)-\Psi_2(\tilde{u},\tilde{v})\parallel_1\\&\leq\left\|\int_{0}^{T}{\int_{\mathbb{R}^N}{G\left( x-y,t-\tau  \right)}\left\|f(y,\tau,v)-f(y,\tau,\tilde{v})\right\|_\infty dyd\tau }\right\|_{L^{\infty}[0,T]}\\&+\left\|\int_{0}^{T}{\int_{\mathbb{R}^N}{G\left( x-y,t-\tau  \right)}\left\|g(y,\tau,u)-g(y,\tau,\tilde{u})\right\|_\infty dyd\tau }\right\|_{L^{\infty}[0,T]}\\&\leq T(\left\|f(y,\tau,v)-f(y,\tau,\tilde{v})\right\|_1+\left\|g(y,\tau,u)-g(y,\tau,\tilde{u})\right\|_1)
\\&\leq T(\left\|v-\tilde{v}\right\|_1+\left\|u-\tilde{u}\right\|_1)\\&\leq T|||(u, v)-(\tilde{u},\tilde{v})|||
\\&\leq \frac{1}{2}|||(u, v)-(\tilde{u},\tilde{v})|||,
\end{align*}
where $T$ chosen such that $T\leq\frac{1}{2}.$

Then, by the Banach fixed point theorem, there exists a mild solution $(u, v)\in E_T$ of problem \eqref{3.1}-\eqref{3.2}

\subsection{Critical exponents of Fujita type for the integro-differential diffusion system}
\begin{definition}\label{4.3} A couple of functions $(u,v)\in L^q_{loc}(\mathbb{R^N}\times(0,T))\times L^p_{loc}(\mathbb{R^N}\times(0,T))$
is a weak solution of problem \eqref{3.1}-\eqref{3.2}, if
\begin{multline}\label{4.1}-\int\limits_{\mathbb{R}^N}{u_0( x)\xi (x,0)dx}=\int\limits_{\Omega_T}{u\left( x,t \right){\mathcal{D}}_{T-,t}^{1-\alpha }{\Delta_x{\xi}(x,t)dxdt}}\\+ \int\limits_{\Omega_T}{u\left( x,t \right){{\xi }_{t}}\left( x,t \right)dx dt}+\int\limits_{\Omega_T}{|x|^{\rho_1} t^{\sigma_1}{v^{p}( x,t )\xi ( x,t)dxdt}}\end{multline}
and
\begin{multline}\label{4.2}-\int\limits_{\mathbb{R}^N}{v_0(x)\psi (x,0)dx}=\int\limits_{\Omega_T}{v\left( x,t \right){\mathcal{D}}_{T-,t}^{1-\beta }{\Delta_x{\psi }}(x,t)dxdt}\\ +\int\limits_{\Omega_T}{v\left( x,t \right){{\psi }_{t}}\left( x,t \right)dx dt+}\int\limits_{\Omega_T}{|x|^{\rho_2} t^{\sigma_2}{u^{q}( x,t )\psi ( x,t)dxdt}},\end{multline}
for any test functions  $\xi(x,t),\psi(x,t)\in C_{x,t}^{2,1}( \Omega_{T})$ with $\xi( x,T )=\psi( x,T )=0.$
\end{definition}

\begin{theorem} Let $p>1,\text{ }q>1$. Suppose that
$$1\leq N\leq\max \biggl\{\frac{{l}_{1}+{q{l}_{2}}}{q};{\frac{{p{l}_{1}}+{l}_{2}}{p}\biggr\}} $$
where
$${{l}_{1}}={\frac{2}{\beta}+\frac{1}{p}\left(\frac{2}{\beta}\sigma_1+\rho_1\right)-\frac{1}{p'}\left(\frac{2}{\beta}+N\right)},$$
$${{l}_{2}}={\frac{2}{\alpha}+\frac{1}{q}\left(\frac{2}{\alpha}\sigma_2+\rho_2\right)-\frac{1}{q'}\left({\frac{2}{\alpha}+N}\right)}.$$

Then, the system (with the initial data) does not admit nontrivial global weak nonnegative solutions.
\end{theorem}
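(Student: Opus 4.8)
The plan is to run the rescaled test-function (nonlinear capacity) argument exactly as in the proof of Theorem \ref{T1}, but now handling the cross-coupling between the two equations. I would argue by contradiction: suppose $(u,v)$ is a nontrivial, nonnegative global weak solution in the sense of Definition \ref{4.3}. I keep the same cut-off $\varphi(x,t)=\Phi\big((|x|^2+t^\theta)/R^2\big)$ with $\Phi$ the nonincreasing profile used before, and I take $\xi=\psi=\varphi$ in the weak formulations \eqref{4.1} and \eqref{4.2}. Since $u_0,v_0\ge 0$ and $\varphi(x,0)\ge 0$, the initial terms on the left are nonpositive and may be discarded, leaving
\begin{equation*}
\int_{\Omega_T} h_1 v^p \varphi\,dx\,dt \le \int_{\Omega_T} |u|\,\big(|\mathcal{D}_{T-,t}^{1-\alpha}\Delta_x\varphi| + |\varphi_t|\big)\,dx\,dt,
\end{equation*}
together with the analogous bound for $\int_{\Omega_T} h_2 u^q \varphi$ in which $\alpha$ is replaced by $\beta$ and the roles of $u$ and $v$ are exchanged.

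The decisive difference from the scalar case is the coupling step. Rather than absorbing the linear term by Young's inequality against the \emph{same} nonlinearity, here the source $v^p$ of the first equation must be paired with the nonlinearity $u^q$ of the second. Writing $|u|=|u|(h_2\varphi)^{1/q}(h_2\varphi)^{-1/q}$ and applying H\"older with exponents $q,q'$ gives
\begin{equation*}
\mathcal{I}_1:=\int_{\Omega_T} h_1 v^p \varphi \le \mathcal{I}_2^{1/q}\,\mathcal{A}^{1/q'},\qquad \mathcal{A}:=\int_{\Omega_T}\big(|\mathcal{D}_{T-,t}^{1-\alpha}\Delta_x\varphi|+|\varphi_t|\big)^{q'}(h_2\varphi)^{-q'/q}\,dx\,dt,
\end{equation*}
and symmetrically
\begin{equation*}
\mathcal{I}_2:=\int_{\Omega_T} h_2 u^q \varphi \le \mathcal{I}_1^{1/p}\,\mathcal{B}^{1/p'},\qquad \mathcal{B}:=\int_{\Omega_T}\big(|\mathcal{D}_{T-,t}^{1-\beta}\Delta_x\varphi|+|\varphi_t|\big)^{p'}(h_1\varphi)^{-p'/p}\,dx\,dt.
\end{equation*}
(The integrals $\mathcal{A},\mathcal{B}$ are finite for the admissible $l$, exactly as in \eqref{2.29}.) Eliminating $\mathcal{I}_2$ yields the closed inequality $\mathcal{I}_1^{\,1-1/(pq)}\le \mathcal{A}^{1/q'}\mathcal{B}^{1/(p'q)}$, and symmetrically for $\mathcal{I}_2$.

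Next I would perform the change of variables $x=Ry$, $t=\tau R^{2/\theta}$ as in \eqref{2.30}--\eqref{2.31}. The point is that $\mathcal{A}$ carries the order $\alpha$ together with the weight $h_2=|x|^{\rho_2}t^{\sigma_2}$, so its $R$-power is governed by $l_2$, while $\mathcal{B}$ carries $\beta$ and $h_1$, giving a power governed by $l_1$; this is where the two scalings $\theta=\alpha$ and $\theta=\beta$ enter. Computing the exponents from the explicit form of $\mathcal{D}_{T-,t}^{1-\alpha}\Delta_x\varphi$ (as in the scalar proof, which produces the factor $(1-t/T)^{l+\alpha-1}$ and an attendant power of $R$) gives $\mathcal{I}_1^{\,1-1/(pq)}\le C R^{\Lambda}$ and $\mathcal{I}_2^{\,1-1/(pq)}\le C R^{\Lambda'}$, where $\Lambda,\Lambda'$ are affine combinations of $l_1,l_2$. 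A direct check shows $\Lambda\le 0\iff N\le (l_1+q l_2)/q$ and $\Lambda'\le 0\iff N\le (p l_1+l_2)/p$, so the hypothesis $N\le\max\{(l_1+q l_2)/q,\,(p l_1+l_2)/p\}$ guarantees that at least one of $\Lambda,\Lambda'$ is nonpositive. If the relevant exponent is strictly negative, letting $R\to\infty$ forces (say) $\mathcal{I}_1=0$, whence $v\equiv 0$; the coupled inequality $\mathcal{I}_2\le\mathcal{I}_1^{1/p}\mathcal{B}^{1/p'}$ then gives $\mathcal{I}_2=0$ and $u\equiv 0$, contradicting nontriviality. In the critical case (exponent zero) I would repeat the refinement ending the proof of Theorem \ref{T1}: restrict the test-function integrals to the annulus $\Omega_R$, invoke the convergence property \eqref{2.33} and the H\"older estimate \eqref{2.34}, and pass to the limit $R\to\infty$ to conclude $\int_{\Omega}h_1 v^p = \int_{\Omega}h_2 u^q =0$.

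The hard part will be the consistency of the scaling. Unlike the scalar equation, where the single choice $\theta=\alpha$ simultaneously balances $\mathcal{D}_{T-,t}^{1-\alpha}\Delta_x\varphi$ and $\varphi_t$, the system carries two distinct orders $\alpha\neq\beta$, so no single $\theta$ balances both equations at once. Tracking correctly how $\alpha$ gets paired with the weight $h_2$ (and $\beta$ with $h_1$) through the cross-coupling, and reconciling the two resulting scalings, is precisely what produces the two competing expressions $(l_1+q l_2)/q$ and $(p l_1+l_2)/p$; the appearance of the maximum reflects that it suffices for \emph{either} component's capacity estimate to degenerate in order to force the whole solution to vanish.
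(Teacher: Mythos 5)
Your outline follows the paper's proof in all its essentials: argue by contradiction, discard the nonnegative initial terms, use H\"older's inequality to bound the linear term of each equation by the nonlinearity of the \emph{other} equation, eliminate one integral to obtain the closed inequalities $\mathcal{I}_1^{1-1/(pq)}\le \mathcal{B}^{1/q}\mathcal{A}$ and $\mathcal{I}_2^{1-1/(pq)}\le \mathcal{B}\,\mathcal{A}^{1/p}$, rescale, read off the two competing conditions on $N$, and handle the critical case by the annulus refinement from Theorem \ref{T1}. So the route is the same, not a different one.

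There is, however, one genuine gap, and it sits exactly at the step you defer as ``the hard part.'' You take $\xi=\psi=\varphi$ with a single temporal scaling $t^{\theta}$, and you correctly observe that no single $\theta$ can make $|\mathcal{D}^{1-\alpha}_{T-,t}\Delta_x\varphi|$ comparable to $|\varphi_t|$ in the first capacity integral \emph{and} $|\mathcal{D}^{1-\beta}_{T-,t}\Delta_x\varphi|$ comparable to $|\varphi_t|$ in the second, once $\alpha\neq\beta$ (balancing forces $\theta=\alpha$ in the first and $\theta=\beta$ in the second). But you do not say how to escape this, and with a single test function the quantities $\mathcal{A}$ and $\mathcal{B}$ do not each collapse to a single power of $R$, so the exponents $l_1,l_2$ of the statement are not recovered. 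The paper's resolution is to use two \emph{distinct} test functions, $\xi=\Phi\big((t^{2\theta_1}+|x|^2)/R^2\big)$ for the first equation and $\psi=\Phi\big((t^{2\theta_2}+|x|^2)/R^2\big)$ for the second, with $\theta_1=\alpha$ and $\theta_2=\beta$ chosen independently; the weight $(h_2\psi)^{\pm 1/q}$ inserted in the H\"older step of the first equation and $(h_1\xi)^{\pm 1/p}$ in the second is then what pairs $\alpha$ with $(\sigma_2,\rho_2,q)$ inside $l_2$ and $\beta$ with $(\sigma_1,\rho_1,p)$ inside $l_1$ (and forces the two different parabolic cylinders $\Omega_{TR^{2/\alpha}}$, $\Omega_{TR^{2/\beta}}$). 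This two-parameter family of test functions is the one new ingredient beyond the scalar case, and your proposal stops just short of supplying it.
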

Proof. The proof proceeds by contradiction. Therefore, let
$$\xi(x,t)=\Phi\left(\frac{t^{2\theta_1}+|x|^2}{R^2}\right),\,\,\,\,\psi(x,t)=\Phi\left(\frac{t^{2\theta_2}+|x|^2}{R^2}\right)$$
and chosen so that
\begin{multline}\label{4.8}\int\limits_{\Omega_{T{R}^{2/{\theta}_{1} }}}|\mathcal{D}_{\small T{{R}^{2/{\theta}_{1} }}-,t}^{1-\alpha }{\Delta_x{\xi }}(x,t)|^{q'}({h_2\psi })^{-q'/q}(x,t)dxdt<\infty,\\ \int\limits_{\Omega_{T{R}^{2/{\theta}_{1} }}}{{{\left| {{\xi }_{t}}(x,t) \right|}^{q'}}}{({h_2\psi })^{-q'/q}}(x,t)dxdt<\infty,\end{multline}
and
\begin{multline}\label{4.9}\int\limits_{\Omega_{T{R}^{2/{\theta}_{2}}}}|\mathcal{D}_{\small T{{R}^{2/{\theta}_{2} }}-,t}^{1-\beta }{\Delta_x{\psi }(x,t)}|^{p'}(h_1{\xi })^{-p'/p}(x,t)dxdt<\infty,\\ \int\limits_{\Omega_{T{R}^{2/{\theta}_{2}}}}{{{\left| {{\psi }_{t}}(x,t) \right|}^{p'}}}{(h_1{\xi })^{-p'/p}}(x,t)dxdt<\infty.\end{multline}

To estimate the right-hand side of \eqref{4.1} on $\Omega_{T{R}^{2/{\theta}_{1}}}$
\begin{multline*}\int\limits_{{\Omega}_{\small T{{R}^{2/{\theta}_{1} }}}}u(x,t)\mathcal{D}_{\small T{{R}^{2/{\theta}_{1} }}-,t}^{1-\alpha }{\Delta_x{\xi }(x,t)}dxdt\\=\int\limits_{{\Omega}_{T{{R}^{2/{\theta}_{1} }}}}u(x,t)
(h_2\psi)^{1/q}(x,t)[\mathcal{D}_{\small T{{R}^{2/{\theta}_{1} }}-,t}^{1-\alpha }{\Delta_x{\xi }(x,t)}](h_2{\psi})^{-1/q}(x,t)dxdt,\end{multline*}
\begin{equation*}\int\limits_{{\Omega}_{T{{R}^{2/{\theta}_{1}}}}}{u\left( x,t \right){{\xi }_{t}}\left( x,t \right)dx dt}=\int\limits_{{\Omega}_{T{{R}^{2/{\theta}_{1} }}}}u(x,t)
(h_2\psi)^{1/q}(x,t){{\xi }_{t}}(x,t)(h_2{\psi})^{-1/q}(x,t)dxdt.\end{equation*}

Take into consideration the $\varepsilon$-Young equality
$$XY\leq \varepsilon {X}^{q}+C(\varepsilon){Y}^{q'}, \,\,\,\,q+q'=qq',\,\,\,\, X,Y\geq0,$$
we have the estimates

\begin{multline*}\int\limits_{{\Omega}_{T{{R}^{2/{\theta}_{1}}}}}u(x,t)\mathcal{D}_{\small T{{R}^{2/{\theta}_{1}}}-,t}^{1-\alpha }{\Delta_x{\xi }(x,t)}dxdt\leq\\ \varepsilon\int\limits_{{\Omega}_{T{{R}^{2/{\theta}_{1}}}}} |u(x,t)|^{q}(h_2\psi)(x,t)dxdt+C(\varepsilon) \int\limits_{{\Omega}_{T{{R}^{2/{\theta}_{1}}}}}{|\mathcal{D}_{T{{R}^{2/{\theta}_{1}}}-,t}^{1-\alpha }{\Delta_x{\xi }(x,t)}|}^{q'}(h_2\psi)^{-q'/q}(x,t)dxdt\end{multline*}
and
\begin{multline*}\int\limits_{{{\Omega}_{T{{R}^{2/{\theta}_{1}}}}}}u(x,t){\xi}_{t}(x,t)dxdt\\ \le\varepsilon\int\limits_{{\Omega}_{T{{R}^{2/{\theta}_{1}}}}}|u(x,t)|^{q}(h_2\psi)(x,t)dxdt+C(\varepsilon) \int\limits_{{\Omega}_{T{{R}^{2/{\theta}_{1}}}}}{|\xi_{t}(x,t)|}^{q'}(h_2\psi)^{-q'/q}(x,t)dxdt\end{multline*}
respectively.

Similarly,
\begin{multline*}\int\limits_{{\Omega}_{T{{R}^{2/{\theta}_{2}}}}}v(x,t)\mathcal{D}_{\small T{{R}^{2/{\theta}_{2}}}-,t}^{1-\beta }{\Delta_x{\psi }(x,t)}dxdt\leq\\ \varepsilon\int\limits_{{\Omega}_{T{{R}^{2/{\theta}_{2}}}}} |v(x,t)|^{p}(h_1\xi)(x,t)dxdt+C(\varepsilon) \int\limits_{{\Omega}_{T{{R}^{2/{\theta}_{2}}}}}{|\mathcal{D}_{T{{R}^{2/{\theta}_{2}}}-,t}^{1-\beta }{\Delta_x{\psi }(x,t)}|}^{p'}(h_1\xi)^{-p'/p}(x,t)dxdt\end{multline*}
and
\begin{multline*}\int\limits_{{{\Omega}_{T{{R}^{2/{\theta}_{2}}}}}}v(x,t){\psi}_{t}(x,t)dxdt\\ \le\varepsilon\int\limits_{{\Omega}_{T{{R}^{2/{\theta}_{2}}}}}|v(x,t)|^{p}(h_1\xi)(x,t)dxdt+C(\varepsilon) \int\limits_{{\Omega}_{T{{R}^{2/{\theta}_{2}}}}}{|\psi_{t}(x,t)|}^{p'}(h_1\xi)^{-p'/p}(x,t)dxdt.\end{multline*}

Now, taking $\varepsilon$ small enough, we obtain the estimates
\begin{multline}\label{4.10}\int\limits_{{\Omega}_{T{{R}^{2/{\theta}_{1}}}}}{h_2(x,t)\arrowvert u(x,t)\mid}^{q}\psi(x,t)dxdt\\\leq C(\varepsilon) \int\limits_{{\Omega}_{T{{R}^{2/{\theta}_{1} }}}}{\left\{|\mathcal{D}_{T{{R}^{2/{\theta}_{1}}}-,t}^{1-\alpha }{\Delta_x{\xi }(x,t)}|^{q'}+{|\xi_{t}(x,t)\arrowvert}^{q'}\right\}(h_2{\psi })^{-q'/q}(x,t)}dxdt\end{multline}
and
\begin{multline*}\label{4.10}\int\limits_{{\Omega}_{T{{R}^{2/{\theta}_{2}}}}}{h_1(x,t)\arrowvert v(x,t)\mid}^{p}\xi(x,t)dxdt\\ \leq C(\varepsilon) \int\limits_{{\Omega}_{T{{R}^{2/{\theta}_{2} }}}}{\left\{|\mathcal{D}_{T{{R}^{2/{\theta}_{2}}}-,t}^{1-\beta }{\Delta_x{\psi }(x,t)}|^{p'}+{|\psi_{t}(x,t)\arrowvert}^{p'}\right\}(h_1{\xi })^{-p'/p}(x,t)}dxdt.\end{multline*}

Using the H\"{o}lder inequality, we may write
\begin{align*}&{\int\limits_{\Omega_{T{{R}^{2/{\theta}_{1} }}}}{u\left( x,t \right)\left|{\mathcal{D}}_{{T{{R}^{2/{\theta}_{1} }}}-,t}^{1-\alpha }{\Delta_x{\xi }}(x,t)\right| dxdt}} \\& \leq \left({\int\limits_{\Omega_{T{{R}^{2/{\theta}_{1} }}}}{{{\left| u(x,t) \right|^{q}}}(h_2\psi})(x,t)dxdt}\right)^{1/q}\\&\left({\int\limits_{\Omega_{T{{R}^{2/{\theta}_{1} }}}}{{{\left|{\mathcal{D}}_{{T{{R}^{2/{\theta}_{1} }}}-,t}^{1-\alpha }{\Delta_x{\xi }}(x,t)\right|}^{q'}}}(h_2{\psi})^{-q'/q}(x,t)dxdt}\right)^{1/q'}
\end{align*}
and
\begin{multline*}{\int\limits_{\Omega_{T{{R}^{2/{\theta}_{1} }}}}{u\left( x,t \right){{\xi }_{t}}\left( x,t \right)dx dt}} \\ \leq\left({\int\limits_{\Omega_{T{{R}^{2/{\theta}_{1} }}}}{{{\left| u(x,t) \right|^{q}}}(h_2\psi})(x,t)dxdt}\right)^{1/q}\left({\int\limits_{\Omega_{T{{R}^{2/{\theta}_{1}}}}}{{{\left|{{\xi }}_{t}(x,t)\right|}^{q'}}}(h_2{\psi})^{-q'/q}(x,t)dxdt}\right)^{1/q'}.
\end{multline*}
Consequently,
\begin{equation}\label{4.12}\int\limits_{{{\Omega}_{T{{R}^{2/{\theta}_{1}}}}}\text{ }}{{h_1(x,t){\left| v(x,t) \right|}^{p}}}{{\xi }(x,t)dxdt}\leq {{\left( \int\limits_{{{\Omega}_{T{{R}^{2/{\theta}_{1}}}}}}{{{\left| u(x,t) \right|}^{q}}(h_2\psi)(x,t)}dxdt \right)}^{1/q}}\cdot \mathcal{A},\end{equation}
with
\begin{multline*}\mathcal{A}=\left({\int\limits_{\Omega_{T{{R}^{2/{\theta}_{1}}}}}{{{\left|{\mathcal{D}}_{T{{R}^{2/{\theta}_{1}}}-,t}^{1-\alpha }{\Delta_x{\xi }}(x,t)\right|}^{q'}}}(h_2{\psi})^{-q'/q}(x,t)dxdt}\right)^{1/q'}\\+\left({\int\limits_{\Omega_{T{{R}^{2/{\theta}_{1} }}}}{{{\left|{{\xi }}_{t}(x,t)\right|}^{q'}}}(h_2{\psi})^{-q'/q}(x,t)dxdt}\right)^{1/q'} .\end{multline*}
The same way, we obtain the estimate
\begin{equation}\label{4.13}\int\limits_{{{\Omega}_{T{{R}^{2/{\theta}_{2}}}}}\text{ }}{{{h_2(x,t)\left| u(x,t) \right|}^{q}}}{{\psi }(x,t)dxdt}\le {{\left( \int\limits_{{{\Omega}_{T{{R}^{2/{\theta}_{2}}}}}\text{ }}{{{\left| v(x,t) \right|}^{p}}(h_1\xi)(x,t)}dxdt \right)}^{1/p}}\cdot \mathcal{B},\end{equation}
with
\begin{multline*}\mathcal{B}=\left({\int\limits_{\Omega_{T{{R}^{2/{\theta}_{2}}}}}{{{\left|{\mathcal{D}}_{T{{R}^{2/{\theta}_{2}}}-,t}^{1-\beta }{\Delta_x{\psi }}(x,t)\right|}^{p'}}}(h_1{\xi})^{-p'/p}(x,t)dxdt}\right)^{1/p'}\\+\left({\int\limits_{\Omega_{T{{R}^{2/{\theta}_{2} }}}}{{{\left|{{\psi }}_{t}(x,t)\right|}^{p'}}}(h_1{\xi})^{-p'/p}(x,t)dxdt}\right)^{1/p'} .\end{multline*}

In virtue of the inequalities \eqref{4.12} and \eqref{4.13}, we may write
\begin{equation}\label{4.14}{{\left( \int\limits_{{{\Omega}_{T{{R}^{2/{\theta}_{1} }}}}\text{ }}{{h_1(x,t){\left| v(x,t) \right|}^{p}}}{{\xi }(x,t)}dxdt \right)}^{1-\frac{1}{pq}}}\le {{\mathcal{B}}^{1/q}}\cdot \mathcal{A}\end{equation}
and
\begin{equation}\label{4.15}{{\left( \int\limits_{{{\Omega}_{T{{R}^{2/{\theta}_{2}}}}}\text{ }}{{h_2(x,t){\left| u(x,t) \right|}^{q}}}\psi(x,t) dxdt\right)}^{1-\frac{1}{pq}}}\le \mathcal{B}\cdot {{\mathcal{A}}^{1/p}}.\end{equation}

After that, let us make the change of variables $t={R}^{2/{\theta}_{1} }\tau,  x=Ry$  in $\mathcal{A}$.

While  $t={R}^{2/{\theta}_{2} }\tau , x=Ry$ in $\mathcal{B}$, respectively.

And set
$$\Omega_i:=\{(y,\tau)\in {\mathbb{R}^N}\times(0,T/R^{\theta_i}), |y|^2+\tau^{{\theta}_{i}}<2\}, \mu:=\tau^{{\theta}_{i}}+|y|^2, i=1,2.$$

After that, we choose $\theta_1$ such that the right-hand side of $\mathcal{A}$
\begin{align*}&\left(\int\limits_{{\Omega}_{T{{R}^{2/{\theta}_{1}}}}}|\mathcal{D}_{T{{R}^{2/{\theta}_{1}}}-,t}^{1-\alpha }{\Delta_x{\xi }(x,t)}|^{q'}(h_2{\psi })^{-q'/q}(x,t)dxdt\right)^{1/q'}\\&=
\left(\int\limits_{{\Omega }_{T{{R}^{2/{\theta}_{1}}}}}{\left[ -\frac{1}{\Gamma \left( \alpha  \right)}\int\limits_{\tau {{R}^{2/{\theta}_{1}}}}^{T{{R}^{2/{\theta}_{1}}}}{{(s-t)^{\alpha -1 }}}{\Delta_x{\xi }_s}\left( x,s \right)ds \right]}^{q'}{(h_2{\psi })^{-q'/q}}dxdt \right)^{1/q'}\\& \leq R^{\frac{2}{{\theta}_{1}}(\alpha-1) -2-\frac{1}{q} \left({\frac{2}{\theta_1}\sigma_2+\rho_2}\right)+\frac{1}{q'}\left(\frac{2}{{\theta}_{1}}+N\right)}\\&\times\left(\int\limits_{\Omega_1 }|\mathcal{D}_{T-,\tau}^{1-\alpha }(\Delta_y{{\Phi }_{\tau }}\circ\mu)|^{q'}{h_2}^{-q'/q}(y,\tau)(\Phi \circ\mu)^{-q'/q}dyd\tau\right)^{1/q'}\end{align*}
and
\begin{multline*}\left(\int\limits_{{{\Omega}_{T{{R}^{2/{{\theta}_{1}}}}}}}{{{\left| {{\xi }_{\tau}} \right|}^{q'}}}{{(h_2\psi })^{-q'/q}}dxdt\right)^{1/q'}\\ \le{{R}^{-\frac{2}{{\theta}_{1}}-\frac{1}{q} \left({\frac{2}{\theta_1}\sigma_2+\rho_2}\right)+\frac{1}{q'}\left(\frac{2}{{\theta}_{1}}+N\right)}}\left(\int\limits_{\Omega_1 }{{{\left| ({{\Phi }_{\tau}}\circ \mu )\right|}^{q'}}}{h_2}^{-q'/q}(y,\tau){{\left( \Phi \circ \mu  \right)}^{-q'/q}}dyd\tau\right)^{1/q'} \end{multline*}
are of the same order in $R$.

 Herewith we can get $\theta_1=\alpha$. Similarly, $\theta_2=\beta$ in $\mathcal{B}$ .

 Then from \eqref{4.14}-\eqref{4.15} we have the estimates
\[{{\left( \int\limits_{{{\Omega}_{T{{R}^{2/\alpha}}}}\text{ }}{h_1(x,t){{\left| v(x,t) \right|}^{p}}}{{\xi }(x,t)} dxdt\right)}^{1-\frac{1}{pq}}}\le C_1 \left({{R}^{-{{l}_{1}}}}\right)^{1/q}{{R}^{-{{l}_{2}}}} \]
and
$${{\left( \int\limits_{{{\Omega}_{T{{R}^{2/\beta}}}}\text{ }}{h_2(x,t){{\left| u(x,t) \right|}^{q}}}\psi(x,t) dxdt\right)}^{1-\frac{1}{pq}}}\le C_2 {{R}^{-{{l}_{1}}}} \left({{R}^{-{{l}_{2}}}}\right)^{1/p} ,$$
where
${{l}_{1}}={\frac{2}{\beta}+\frac{1}{p}\left(\frac{2}{\beta}\sigma_1+\rho_1\right)-\frac{1}{p'}\left(\frac{2}{\beta}+N\right)},$    ${{l}_{2}}={\frac{2}{\alpha}+\frac{1}{q}\left(\frac{2}{\alpha}\sigma_2+\rho_2\right)-\frac{1}{q'}\left({\frac{2}{\alpha}+N}\right)}.$

Accordingly,
\begin{equation}\label{4.16}{{\left( \int\limits_{{{\Omega}_{T{{R}^{2/\alpha}}}}\text{ }}{h_1(x,t){{\left| v(x,t) \right|}^{p}}}{{\xi }(x,t)} dxdt\right)}^{1-\frac{1}{pq}}}\le C_1{{R}^{-\left( {{l}_{1}}/q+{{l}_{2}} \right)}}\end{equation}
and
\begin{equation}\label{4.17}{{\left( \int\limits_{{{\Omega}_{T{{R}^{2/\beta}}}}\text{ }}{h_2(x,t){{\left| u(x,t) \right|}^{q}}}{{\psi }(x,t)} dxdt\right)}^{1-\frac{1}{pq}}}\le C_2{{R}^{-\left( {{l}_{1}}+{{l}_{2}}/p \right)}},\end{equation}
where
\begin{multline*}C_1=C(\varepsilon)\Biggl[\int\limits_{\Omega_1 }\left(|\mathcal{D}_{TR^{2/\beta}-,t}^{1-\beta}({\Delta_y{\Phi }_{\tau }}\circ\mu)|^{p'}+{{{| ({{\Phi }_{\tau}}\circ \mu )|}^{p'}}}\right)h_1^{-p'/p}(y,\tau){{\left( \Phi \circ \mu  \right)}^{-p'/p}}dyd\tau\Biggr]^{1/q}\\ \times  \int\limits_{\Omega_1}\left(|\mathcal{D}_{TR^{2/\alpha}-,t}^{1-\alpha }(\Delta_y{{\Phi }_{\tau}}\circ\mu)|^{q'}+{{{| ({{\Phi }_{\tau}}\circ \mu )|}^{q'}}}\right)h_2^{-q'/q}(y,\tau){{\left( \Phi \circ \mu  \right)}^{-q'/q}}dyd\tau,\end{multline*}
\begin{multline*}C_2=C(\varepsilon)\int\limits_{\Omega_1 }\left(|\mathcal{D}_{TR^{2/\beta}-,t}^{1-\beta}(\Delta_y{{\Phi }_{\tau }}\circ\mu)|^{p'}+{{{| ({{\Phi }_{\tau}}\circ \mu )|}^{p'}}}\right)h_1^{-p'/p}(y,\tau){{\left( \Phi \circ \mu  \right)}^{-p'/p}}dyd\tau\\ \times\Biggl[\int\limits_{\Omega_1 }\left(|\mathcal{D}_{TR^{2/\alpha}-,t}^{1-\alpha }(\Delta_y{{\Phi }_{\tau }}\circ\mu)|^{q'}+{{{| ({{\Phi }_{\tau}}\circ \mu )|}^{q'}}}\right)h_2^{-q'/q}(y,\tau){{\left( \Phi \circ \mu  \right)}^{-q'/q}}dyd\tau\Biggr]^{1/p}.\end{multline*}

If we choose ${-\left( {{l}_{1}}/q+{{l}_{2}} \right)}<0$ in \eqref{4.16} and let $R\rightarrow\infty$, we obtain
\begin{equation}\label{4.18}{\left( \int\limits_{\Omega_{TR^{2/\alpha}}}h_1(x,t){\left|v(x,t)\right|}^{p}{{\xi }(x,t)} dxdt\right)}^{1-\frac{1}{pq}}\leq0.\end{equation}

This implies that $v=0$ a.e., which is a contradiction.

In case ${-\left( {{l}_{1}}/q+{{l}_{2}} \right)}=0$, observe that the convergence of the integral in \eqref{4.16} if
$$\Omega_{R}=\{(x,t)\in {\mathbb{R}^N}\times (0,T): R^2<x^2+t^{\alpha}\leq 2R^2\}$$
then
\begin{equation}\label{4.19}
\lim_{R\rightarrow\infty} \int\limits_{{\Omega}_{R}}h_1(x,t){\arrowvert v(x,t)\mid}^{p}{{\xi }(x,t)} dxdt=0.
\end{equation}

Using the H\"{o}lder inequality in place of estimate \eqref{4.10}, we have
\begin{equation}\label{4.20}
\int\limits_{{\Omega}_{T{{R}^{2/\alpha }}}}h_1(x,t){\arrowvert v(x,t)\mid}^{p}\xi(x,t) dxdt\leq L\biggl(\int\limits_{{\Omega}_{R}}h_1(x,t){\arrowvert v(x,t)\mid}^{p}\xi(x,t) dxdt \biggr)^{1/p},\end{equation}
where
\begin{multline*}L:=\biggl(\int\limits_{{\Omega}^{1}}|\mathcal{D}_{T{{R}^{2/\alpha }}-,t}^{1-\alpha }(\Delta_y{{\Phi }_{\tau}}\circ\mu)|^{p'}h_1^{-p'/p}(y,\tau)(\Phi \circ\mu)^{-p'/p}\biggr)^{p'}\\+\biggl(\int\limits_{{\Omega}^{1}}{{{\left| ({{\Phi }_{\tau}}\circ \mu )\right|}^{p'}}}h_1^{-p'/p}(y,\tau){{\left( \Phi \circ \mu  \right)}^{-p'/p}} dyd\tau\biggr)^{1/p'}\end{multline*}
and
$$\Omega^{1}=\{(y,\tau)\in {\mathbb{R}^N}\times (0,T/R^{2/\alpha}): 1<y^2+\tau^{\alpha}\leq 2\}.$$

According to \eqref{4.20}, we obtain via \eqref{4.19}, after passing to the limit as $R\rightarrow\infty$,
$$\int\limits_{\Omega_1}h_1(x,t){\arrowvert v(x,t)\mid}^{p} dxdt=0.$$

This leads to $v(x,t)=0$.

In the case ${{l}_{1}}/q+{{l}_{2}}\ge 0$ is
\begin{equation}\label{4.21}N\le \frac{2(\alpha(1+\sigma_1)+p\beta(1+\sigma_2))+\alpha\beta(\rho_1+p\rho_2)}{\alpha\beta(pq-1)}\end{equation}
and according to \eqref{4.15}, we get
\begin{equation}\label{4.22}N\le \frac{2(\beta(1+\sigma_2)+q\alpha(1+\sigma_1))+\alpha\beta(p\rho_1+\rho_2)}{\alpha\beta(pq-1)}.\end{equation}

Consequently, from \eqref{4.21} and \eqref{4.22} we obtain

\begin{multline*}1\leq N\leq\max \biggl\{\frac{2(\alpha(1+\sigma_1)+p\beta(1+\sigma_2))+\alpha\beta(\rho_1+p\rho_2)}{\alpha\beta(pq-1)},
\\ \frac{2(\beta(1+\sigma_2)+q\alpha(1+\sigma_1))+\alpha\beta(p\rho_1+\rho_2)}{\alpha\beta(pq-1)}\biggr\}.\end{multline*}

\section*{\large Acknowledgments}
This research is financially supported in parts by the FWO Odysseus Project, as well as by the Ministry of Education and Science of the Republic of Kazakhstan Grant No. AP05131756. No new data was collected or generated during the course of research.

\end{document}